	\newtheorem{theorem}{Theorem}
	\newtheorem{definition}[theorem]{Definition}
	\newtheorem{corollary}[theorem]{Corollary}
	\newtheorem{lemma}[theorem]{Lemma}
	\newtheorem{proposition}[theorem]{Proposition}
	\newcommand{\equ}{{\rm eq}}
	\newcommand{\diam}{{\rm diam}}
\begin{document}

\title{The clique number of the exact distance $t$-power graph: complexity and eigenvalue bounds}

\author{
Aida Abiad
\thanks{\texttt{a.abiad.monge@tue.nl}, Department of Mathematics and Computer Science, Eindhoven University of Technology, The Netherlands}
\and
Afrouz {Jabal Ameli}\thanks{\texttt{a.jabal.ameli@tue.nl}, Department of Mathematics and Computer Science, Eindhoven University of Technology, The Netherlands} 
\and
Luuk Reijnders
\thanks{\texttt{l.e.r.m.reijnders@student.tue.nl}, Department of Mathematics and Computer Science, Eindhoven University of Technology, The Netherlands}
}

\date{}
\maketitle

\begin{abstract}
The exact distance $t$-power of a graph $G$, $G^{[\sharp t]}$, is a graph which has the same vertex set as $G$, with two vertices adjacent in $G^{[\sharp t]}$ if and only if they are at distance exactly $t$ in the original graph $G$. We study the clique number of this graph, also known as the $t$-equidistant number. We show that it is NP-hard to determine the $t$-equidistant number of a graph, and that in fact, it is NP-hard to approximate it within a constant factor. We also investigate how the $t$-equidistant number relates to another distance-based graph parameter; the $t$-independence number. In particular, we show how large the gap between both parameters can be. The hardness results motivate deriving eigenvalue bounds, which compare well against a known general bound. In addition, the tightness of the proposed eigenvalue bounds is studied.
\end{abstract}	
\noindent \textbf{Keywords:}  Exact distance graph power; Clique number; Complexity; Eigenvalue bounds

	%% main text
%----------------------------------------------

%%%%%%%%%%%%%%%%%%%%%%%%%%%%%%%%%%%%%%%%%%%%%%%%%%%%%%%%%%%
\section{Introduction}
%%%%%%%%%%%%%%%%%%%%%%%%%%%%%%%%%%%%%%%%%%%%%%%%%%%%%%%%%%%
A set \textcolor{black}{$S$} of vertices of a graph $G$ is a \emph{$t$-equidistant set} if $d(u,v) = t$ for all $u,v \in \textcolor{black}{S}$, where $u \not = v$. A set $\textcolor{black}{S} \subseteq V(G)$ is an \emph{equidistant set} if there exists $t \in \mathbb{N}$ such that $\textcolor{black}{S}$ is a $t$-equidistant set; we will say that a $t$-equidistant set $\textcolor{black}{S}$ has \emph{diameter} $t$, which we will denote by $\diam(\textcolor{black}{S}) = t$. The \emph{$t$-equidistant number} of a graph $G$ is the number of vertices in a largest $t$-equidistant set of $G$ and the \emph{equidistant number} $\equ(G)$ is defined by $\equ (G) = \max \{ \equ_t(G): 1 \leq t \leq \diam(G)\} $. A \emph{strict equidistant set} $\textcolor{black}{S}$ is an equidistant set such that if $\textcolor{black}{S}$ induces a clique then $|\textcolor{black}{S}| = 1$; the \emph{strict equidistant number} $\equ'(G)$ is the size of a largest strict equidistant set in $G$.

For a positive integer $t$, the \emph{power graph} of $G$, denoted by $G^t$, is a graph which has the same vertex set as $G$, with two vertices adjacent in $G^t$ if and only if they are at distance at most $t$ in the original graph $G$. A similar, but less studied, notion is the \emph{exact distance $t$-power} of a graph $G$. This is denoted by $G^{[\sharp t]}$, and is a graph which has the same vertex set as $G$, with two vertices adjacent in $G^{[\sharp t]}$ if and only if they are at distance exactly $t$ in the original graph $G$. One of the first explicit definitions of this concept is due to Simi\'c \cite{S1983}, who studied graphs which had an exact distance power isomorphic to their line graph. Other work includes the investigation of graphs that are isomorphic to their exact square \cite{AF2017} and the structure of exact distance powers of products of graphs \cite{BGKT2019}. Also, the notion of exact distance powers often appears in the context of the theory of sparse graphs, see \cite[Section 11.9]{bookNO2012}. The main focus in earlier investigations of exact distance graphs was on their chromatic number, see for instance \cite{HKQ2018, Q2020, FHMNNSV2021, la_2-distance_2022}.

Using the exact distance $t$-power graph we have an alternative definition of the $t$-equidistant number: $\equ_t(G) = \omega(G^{[\sharp t]})$. In this work we focus on investigating the clique number of the exact distance $t$-power of a graph, a parameter that has been investigated before. Instances of it are the work by Foucaud et al.\ \cite{foucaud_cliques_2021}, who studied the problem of finding the maximum possible clique number among exact distance $t$-powers of graphs of given maximum degree. Haemers \cite{H1997} studied a similar parameter related to the $t$-equidistant number. 

In this paper we show that the problem of determining the ($t$-)equidistant number is NP-hard, and also that it is impossible to approximate the $t$-equidistant number with a constant factor unless $P = NP$. We also investigate the relation between the equidistant number and the well studied \emph{$t$-independence number} of a graph $G$, which is the maximum size of a set of vertices in $G$ at pairwise distance greater than $t$. In particular, we focus on understanding how large the gap between both parameters can be. The hardness results motivate deriving eigenvalue bounds, since these can be computed in polynomial time. Using the results on the parameters' relationship and eigenvalue interlacing, we derive several eigenvalue bounds on the $t$-equidistant number. We show that, for several instances, the obtained bounds compare favorably to previously known bounds. Finally, we study the tightness of the proposed eigenvalue bounds.

%%%%%%%%%%%%%%%%%%%%%%%%%%%%%%%%%%%%%%%%%%%%%%%%%%%%%%%%%%%
\subsubsection*{Some applications of the parameter $\equ_t(G)$}
%%%%%%%%%%%%%%%%%%%%%%%%%%%%%%%%%%%%%%%%%%%%%%%%%%%%%%%%%%%

The parameter of interest, $\equ_t(G)$, has a direct application in the field of Information Theory. Let $A_q = \{0,\hdots,q-1\}$ for an integer $q \geq 2$. A non-empty subset $C \subset A_q^n$ is called a $q$-\emph{ary code}. If $\abs{C} =N$ and $d$ is the minimum Hamming distance in $C$, then $C$ is called a $(n,d,N)_q$-\emph{Hamming code}. By $A_q(n,d)$ we denote the largest $N \in \mathbb{N}$ for which there exists a $(n,d,N)_q$-Hamming code. If the Hamming distance between any codewords in $C$ is exactly $d$, then the code $C$ is called \emph{equidistant}.
 A quantity of interest in the study of $q$-ary codes is $A_q(n,d)$, which is the largest $N \in \mathbb{N}$ for which there exists a $(n,d,N)_q$-Hamming code. A similar quantity exists for equidistant codes specifically, $B_q(n,d)$ is the largest $N \in \mathbb{N}$ for which there exists an equidistant $(n,d,N)_q$-Hamming code. Much like the $(t-1)$-independence number of the hypercube graph $Q_n$ is equal $A_2(n,t)$ \cite{abiad_eigenvalue_2023}, the $t$-equidistant number of $Q_n$ is equal to $B_2(n,t)$. Hence, the spectral bounds on the equidistant number presented here can be applied in the context of equidistant codes.
 Equidistant codes have been extensively studied, see for instance \cite{B2007, L1973}, and their 
 interest is a consequence of their application in random network coding \cite{KK2008}. 

Another application is to \emph{equidistant subspace codes}, i.e., subspace codes with the property that the intersection of any pair of codewords has the same dimension. Equidistant subspace codes were shown to have relevant applications in distributed storage in \cite{RE}.

%%%%%%%%%%%%%%%%%%%%%%%%%%%%%%%%%%%%%%%%%%%%%%%%%%%%%%%%%%%
\section{Complexity of $\equ_t(G)$ and $\equ(G)$}\label{sec:complexity}
%%%%%%%%%%%%%%%%%%%%%%%%%%%%%%%%%%%%%%%%%%%%%%%%%%%%%%%%%%%
 
We investigate the complexity of the ($t$-)equidistant number, and show that it is NP-hard to determine both $\equ_t$ and $\equ$.

First, we will show that it is NP-hard to determine $\equ_t$. We do this by constructing auxiliary graphs where edges in the original graphs become paths of length $t$. We remark that the gadget used for this reduction for odd values of $t$ resembles the gadget used for the NP-hardness proof of $\omega(G^t)$ by Lin and Skiena \cite[Theorem 5.2]{LS1995}. Finally, we show that it is NP-hard to approximate $\equ_t$ within a constant factor.

We begin this section by exploiting some properties of a well-studied family of graphs known as split graphs. Such properties will be useful in particular for our reduction when $t$ is even.

    \begin{definition}[Split graph]
    A graph $G = (V,E)$ is called \emph{split} if there exists a partition of the vertices into subsets $V_1, V_2$ such that $V_1$ is a clique and $V_2$ is an independent set.
    \end{definition}

Split graphs were defined and characterized by F\"{o}ldes and Hammer \cite{FH77v1, FH77v2}. These graphs were also introduced independently by Tyshkevich and Chernyak \cite{CC86} under the name \emph{polar graphs}.

    \begin{lemma}\label{lem:SplitCheck}
        Whether or not a graph $G$ is split can be determined in polynomial time. Furthermore if $G$ is split, we can find a partition of $V(G)$ into a clique and independent set in polynomial time.
    \end{lemma}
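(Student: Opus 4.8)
The plan is to invoke the classical characterization of split graphs in terms of their degree sequence, which gives an immediately polynomial-time checkable condition. Recall the Hammer--Simeone theorem: a graph $G$ on $n$ vertices with degree sequence $d_1 \geq d_2 \geq \cdots \geq d_n$ is split if and only if, setting $m = \max\{i : d_i \geq i-1\}$, one has
\begin{equation*}
\sum_{i=1}^{m} d_i = m(m-1) + \sum_{i=m+1}^{n} d_i.
\end{equation*}
First I would sort the degree sequence, which costs $O(n \log n)$ time (or $O(n)$ with counting sort since degrees are bounded by $n-1$), then compute $m$ by a single linear scan, and finally check the displayed identity with one more linear pass over partial sums. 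Each step is polynomial in $n$, so the decision problem is settled.

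For the constructive part, I would extract the partition directly from the value $m$ produced above. The natural candidate is to take $V_1$ to be the $m$ vertices of largest degree and $V_2$ the remaining $n-m$ vertices; the Hammer--Simeone proof shows precisely that when the equality holds, the top $m$ vertices form a clique and the bottom $n-m$ form an independent set. Concretely, I would verify in $O(n^2)$ time (by inspecting the adjacency matrix) that $V_1$ is a clique and $V_2$ is independent, which both confirms correctness and produces the explicit partition. Since sorting plus these scans are all polynomial, this establishes the ``furthermore'' clause.

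Alternatively, if one prefers to avoid quoting Hammer--Simeone as a black box, I would give a self-contained greedy argument: order vertices by nonincreasing degree, find the threshold index where vertices stop being able to extend a clique, and argue via a short exchange/counting lemma that the resulting split is correct exactly when $G$ is split. The main obstacle I anticipate is not algorithmic efficiency --- every proposed step is plainly polynomial --- but rather the correctness proof that the degree-based threshold yields a genuine clique--independent-set partition whenever one exists. This amounts to proving that the split partition, if it exists, can always be chosen to respect the degree ordering; this is exactly the content of the forbidden-induced-subgraph characterization (split graphs are those with no induced $2K_2$, $C_4$, or $C_5$), and reconciling the degree-sequence criterion with that structural fact is the only delicate point. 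Given that this is a well-known result, I expect the paper to simply cite it, so the ``proof'' is essentially a matter of recording the right reference and observing that the cited algorithm runs in polynomial time.
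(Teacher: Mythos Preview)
Your proposal is correct and follows essentially the same approach as the paper: both invoke the Hammer--Simeone degree-sequence characterization, compute the threshold index $m$ (the paper calls it $k$), check the displayed equality, and then take the $m$ vertices of largest degree as the clique and the remaining vertices as the independent set. The paper simply cites the result without the extra implementation details or the alternative greedy argument you sketch.
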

    \begin{proof}
        Let $d_1\ge d_2 \ldots \ge d_n$ be the degree sequence of $G$ and let $v_1,v_2,...,v_n$ be a labeling of vertices such that $v_i$ has degree $d_i$. Hammer and Simeone \cite{HS81} showed that a graph is a split iff for largest index $k$ such that $d_k\ge k-1$, we have:
        $$k(k-1)-\sum_{i=1}^k d_i+\sum_{i=k+1}^n d_i=0.$$
        Furthermore the authors showed that if $G$ is split, then $(\bigcup_{i=1}^k v_i,\bigcup_{j=k+1}^nv_j)$ is a partition of $G$ into a clique and an independent set. Note that the above-mentioned process can clearly be done in polynomial time.
    \end{proof}
       
    \begin{theorem}\label{thm:NPhardness}
        Let $G= (V,E)$ be a graph. For any fixed integer $t\ge 1$  finding a maximum $t$-equidistant number of $G$ is an NP-hard problem. 
    \end{theorem}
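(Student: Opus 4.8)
The plan is to give a polynomial-time reduction from \textsc{Clique}, which is NP-hard. The case $t=1$ is immediate, since $\equ_1(G)=\omega(G^{[\sharp 1]})=\omega(G)$. For $t\ge 2$, given an instance $(H,k)$ of \textsc{Clique} I would build an auxiliary graph $G$ by replacing every edge of $H$ with an internally disjoint path of length $t$ (the subdivision gadget announced in the text), possibly with the extra local modifications described below. The first, routine, step is to verify that any walk between two original vertices must traverse whole subdivided edges, so that $d_G(u,v)=t\cdot d_H(u,v)$; in particular two original vertices lie at distance exactly $t$ in $G$ if and only if they are adjacent in $H$. Consequently the original vertices of any $t$-equidistant set form a clique of $H$ and, conversely, any clique of $H$ yields a $t$-equidistant set, giving $\omega(H)$ both as a lower bound and as an ``original-only'' upper bound.

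The crux is then to control the internal (subdivision) vertices, and this is exactly where the parity of $t$ splits the argument. For odd $t$ I would argue that internal vertices can contribute only a bounded number of vertices to any $t$-equidistant set: two internal vertices mutually at distance $t$ through a shared original vertex $p$ must sit at depths $i,j$ with $i+j=t$, and forcing three or more such vertices to be pairwise at distance $t$ would require a common depth $t/2\notin\mathbb{Z}$, which is impossible; a short case analysis extends this to rule out large equidistant sets supported on non-incident edges as well. Hence $\equ_t(G)=\max\{\omega(H),c\}$ for a small constant $c$, and padding $H$ with a disjoint fixed-size clique (so that $\omega(H)\ge c$ is guaranteed) makes $\equ_t(G)=\omega(H)$, completing the odd case. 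This matches the stated resemblance to the Lin–Skiena gadget.

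For even $t=2s$ the same gadget fails, and overcoming this is the main obstacle. The exact midpoints (depth $s$) of the edges incident to a common vertex $p$ are pairwise at distance $2s=t$, so they form a $t$-equidistant set of size $\deg_H(p)$; since $\Delta(H)\ge\omega(H)-1$ always holds, this ``midpoint star'' swamps the clique signal and the plain subdivision cannot encode $\omega(H)$. The plan is to exploit split graphs (via Lemma~\ref{lem:SplitCheck}) to neutralise these unwanted sets: I would augment the gadget so that the midpoints sitting at a common vertex are pulled to distance strictly less than $t$ (for instance by inserting local cliques among them), while checking the two delicate invariants that (i) genuine clique edges of $H$ keep their endpoints at distance exactly $t$ and (ii) no pair of non-adjacent original vertices is brought to distance exactly $t$. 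The role of the split-graph machinery is to guarantee that whatever residual $t$-equidistant configurations survive among the internal vertices are confined to an explicitly recognisable split subgraph, whose clique number is computable in polynomial time by Lemma~\ref{lem:SplitCheck} and can therefore be discounted. The hardest part of the write-up will be proving that this augmentation simultaneously destroys all midpoint stars and introduces no new large equidistant configuration, so that $\equ_t(G)$ still recovers $\omega(H)$; once that balance is established, choosing $k'$ appropriately yields $\omega(H)\ge k\iff \equ_t(G)\ge k'$, and NP-hardness follows for every fixed $t\ge 1$.
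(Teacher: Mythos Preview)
Your odd-$t$ argument is essentially the paper's, though the padding with a disjoint clique is unnecessary: the paper observes directly that a $t$-equidistant set of dummies has size at most $3$, and that size $3$ already forces a triangle in $H$, so $\equ_t=\omega(H)$ without any padding.

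The even-$t$ case, however, contains a genuine misreading of where the split-graph machinery enters. In the paper, Lemma~\ref{lem:SplitCheck} is applied to the \emph{input} of \textsc{Clique}, not to any piece of the constructed graph. If the input is split, its clique number is computed directly; otherwise one may assume the input is non-split, and this assumption is used only once, to guarantee that for any clique $U$ there exists an edge with no endpoint in $U$ (so $V\setminus U$ is not independent). That edge supplies one specific dummy vertex at distance exactly $t$ from every member of $U$, and this is what pins down the exact relation $\equ_t(H)=\omega(G)+1$. There is no ``residual split subgraph'' inside $H$ to analyse.

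Your proposed gadget is also different from the paper's and, as you acknowledge, unproven. The paper does not insert \emph{local} cliques among midpoints sharing an original vertex; it makes \emph{all} midpoints pairwise adjacent. The payoff is immediate: every dummy vertex is within distance $s-1$ of its own midpoint, hence any two dummies are within $(s-1)+1+(s-1)=t-1$ of each other, so a $t$-equidistant set can contain at most one dummy. This kills every dummy configuration in one stroke, with no case analysis. Your local variant would still leave many dummy pairs (on edges with no common endpoint) at distance $\ge t$, and the ``hardest part of the write-up'' you flag is precisely the part the paper's stronger gadget renders trivial. One also has to check that the added midpoint edges do not shrink original--original distances to $t$; with the global clique this is a one-line computation ($s+1+s=t+1$), and the same holds for your local version, but the point is that only the global clique makes the dummy analysis clean.
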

    \begin{proof}
        If $G$ is split, then determining the clique number of $G$ can be done in polynomial time. Let $(V_1,V_2)$ be a partition of the vertices of $G$ such that $V_1$ is a clique and $V_2$ is an independent set. Note that we can obtain such a partition (if it exists) in polynomial time using Lemma~\ref{lem:SplitCheck}. Note that as $V_2$ is an independent set of nodes any clique of $G$ can contain at most one vertex in $V_2$. Furthermore, we already know $V_1$ is a clique. Hence  if $V_1 \cup \{v\}$ is a clique for any $v \in V_2$ then this is the maximum clique and otherwise $V_1$ itself is the maximum clique of $G$.

        Now assume $G$ is not split. We will give a reduction of determining the clique number of $G$ to determining the $t$-equidistant number of a graph $H$. We split into two cases, $t$ even and $t$ odd.
        \begin{itemize}
            \item \textbf{$t$ is odd:} Construct a graph $H=(V',E')$ from $G$ by replacing every edge $e \in E$ by a path of length $t$, that is add $t-1$ additional vertices into every edge. We henceforth call the vertices in $V$ original vertices, and the vertices in $V' \setminus V$ dummy vertices. Note that the distance of original vertices $u$ and $v$ is $d$ in $G$ then their distance is $k\times t$ in $H$. 
            
            We now want to show that $G$ has a clique of size $m$ if and only if $H$ has a $t$-equidistant set of size $m$. The first implication follows directly from the definition of $H$. For the second, note that:
            \begin{enumerate}[label=(\roman*)]
                \item If $U \subset V$ is a $t$-equidistant set in $H$, then $U$ is a clique in $G$.
                \item If $U \subset V'$ is a $t$-equidistant set in $H$, then $U$ either consists entirely of original vertices, or entirely of dummy vertices. This is due to the fact that the distance of a dummy vertex from an original vertex can not be a multiple of $t$.
            \end{enumerate}
            Meaning we only need to consider the additional case when $U \subset V' \setminus V$ is a $t$-equidistant set in $H$. For $m=1$ or $2$ this is trivial, hence assume $\abs{U} = m \geq 3$.
            
            Now, consider a $t$-equidistant set in $H$, for which $U \subset V' \setminus V$ and $\abs{U} = 3$. Consider two vertices $u_1,u_2 \in U$. By construction, they must lie on two distinct paths corresponding to two distinct edges in $G$ that share an endpoint, say $(v_1,v_2), (v_1,v_3) \in E$. Now,
            \begin{equation*}
                t= d(u_1,u_2) = d_H(u_1,v_1) + d_H(u_2,v_1) \implies d_H(u_1,v_1) \neq d_H(u_2,v_1).
            \end{equation*}
            This is because $t$ is odd. Hence, for the third vertex $u_3 \in U$ it cannot be on a path corresponding to an edge incident to $v_1$, as it could never be at distance $t$ from both $u_1$ and $u_2$ in this case. Instead, it must be on the path corresponding to the edge $(v_2,v_3)$. For this to occur, $v_1v_2v_3$ must be a triangle in $G$, meaning we have a clique of size $3$ in $G$.

            Finally, assume in the above that $m \geq 4$. Then there exists a fourth vertex $u_4 \in U$, however, just as $u_3$, this vertex must lie on the path corresponding to the edge $(v_2,v_3)$. But then $d_H(u_3,u_4) <t$ which is a contradiction and hence this case cannot occur. 

            Thus $\omega(G) = \equ_t(H)$ and hence we can find $\omega(G)$ by computing $\equ_t(H)$.

            \item \textbf{$t$ is even:} Just as in the odd case, construct a graph $H=(V',E')$ from $G$ by replacing every edge $e \in E$ by a path of length $t$, that is add $t-1$ additional vertices into every edge.  However, now we additionally connect all the "central" dummy vertices, i.e. the vertices in the center of the paths of length $t$ replacing the edges. We now want to show that $G$ has a clique of size $m$ if and only if $H$ has a $t$-equidistant set of size $m+1$. For this we need a few observations:
            \begin{enumerate}[label=(\roman*)]
                \item $U \subseteq V$ is a $t$-equidistant set in $H$ if and only if $U$ is a clique in $G$.
                \item Any $t$-equidistant set $U \subseteq V'$ in $H$ contains at most $1$ dummy vertex.
                \item For any clique $U \subseteq V$ in $G$, there exists an edge $e \in E$ such that $e$ has no endpoint in $U$.
            \end{enumerate}

            These are true because:
            \begin{enumerate}[label=(\roman*)]
                \item Follows directly from the construction of $H$.
                \item If $v \in V' \setminus V$ is a dummy vertex, then it will be at distance at most $\frac{t}{2}-1$ from a central dummy vertex. But since this holds for all dummy vertices, and all central dummy vertices form a clique, we conclude any two dummy vertices are within distance $(\frac{t}{2}-1)+(\frac{t}{2}-1)+1 = t-1$ from each other, and hence no two dummy vertices could be in the same $t$-equidistant set of $H$.
                \item By assumption,  $G$ is not split and thus if $U$ is a clique in $G$, then $V \setminus U$ is not an independent set in $G$ . In other words, there exist two vertices $u,v \in V \setminus U$ such that $(u,v) \in E$.
            \end{enumerate}

            From assertions (i) and (ii) we find $\omega(G) = \equ_t(H)$ or $\omega(G) +1= \equ_t(H)$. Assertion (iii) tells us we are always in the latter case. Indeed, let $U$ be a clique in $G$ of size $\omega(G)$, then $U$ is a $t$-equidistant set in $H$. Now consider the edge $(v_1,v_2) \in E$ such that $\{v_1,v_2\} \cap U = \emptyset$, and in particular consider its corresponding dummy vertices in $H$. Denote by $w$ the dummy vertex in $H$ for which $d(v_1,w) = 1$ and $d(v_2,w) =t-1$. Now, any vertex in $U$ will be at distance exactly $\frac{t}{2}$ to some central dummy vertex in $H$, and hence at distance $\frac{t}{2}+1$ to the central dummy vertex corresponding to $e$. This in turn means they are all at distance exactly $t$ from the dummy vertex $w$. Thus $U \cup \{w\}$ is a $t$-equidistant set in $H$ of size $\omega(G)+1$ and hence $\omega(G) +1= \equ_t(H)$. This means we can exactly determine $\omega(G)$ by computing $\equ_t(H)$, as we wanted.
        \end{itemize}

        Now since $\omega$ is NP-hard, $\equ_t$ must be NP-hard as well.
    \end{proof}

    In fact, in the next result we show that it is impossible to approximate the $t$-equidistant number with a constant factor unless $P = NP$.

\begin{theorem}
     For every fixed integer $t$ and $\epsilon\ge 0$, there exists no constant approximation for computing the $t$-equidistant number of a graph $H$, where $n$ is the number of vertices of $H$, unless $P=NP$.
\end{theorem}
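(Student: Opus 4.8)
The plan is to upgrade the polynomial-time reduction of Theorem~\ref{thm:NPhardness} into an approximation-preserving reduction from \textsc{Max Clique}. Recall that, given a non-split graph $G$, that construction produces in polynomial time a graph $H$ with $|V(H)| = |V(G)| + (t-1)|E(G)|$ and
\[
\equ_t(H) = \begin{cases} \omega(G), & t \text{ odd},\\ \omega(G)+1, & t \text{ even}. \end{cases}
\]
I would then invoke the classical fact that, unless $P=NP$, \textsc{Max Clique} admits no polynomial-time $c$-approximation for any constant $c\ge 1$. The whole argument reduces to two checks: that $\equ_t(H)$ stays within a constant factor of $\omega(G)$, and that the clique-hardness survives on the non-split instances to which the reduction applies.

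First I would argue that \textsc{Max Clique} remains constant-factor inapproximable when restricted to non-split graphs. This is where Lemma~\ref{lem:SplitCheck} enters: one can test split-ness and, for split graphs, compute $\omega$ exactly in polynomial time (as in the opening paragraph of the proof of Theorem~\ref{thm:NPhardness}). Hence a hypothetical polynomial-time $c$-approximation for $\omega$ on non-split graphs, combined with the exact split-graph algorithm, would yield a $c$-approximation on \emph{all} graphs, contradicting the general inapproximability. Note also that every non-split $G$ has an edge, so $\omega(G)\ge 2$.

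Next I would transfer the gap. Suppose for contradiction that $\mathcal{A}$ is a polynomial-time algorithm that, on input $H$, returns a number $\mathcal{A}(H)$ with $\tfrac1c\,\equ_t(H)\le \mathcal{A}(H)\le \equ_t(H)$ for some constant $c\ge1$. For odd $t$ the reduction is value-preserving, so $\tfrac1c\,\omega(G)\le \mathcal{A}(H)\le \omega(G)$, a genuine $c$-approximation of $\omega(G)$. For even $t$, the bounds $\omega(G)\le \equ_t(H)=\omega(G)+1\le \tfrac32\,\omega(G)$ (valid since $\omega(G)\ge 2$) give $\tfrac1c\,\omega(G)\le \mathcal{A}(H)\le \tfrac32\,\omega(G)$, so $\mathcal{A}(H)$ estimates $\omega(G)$ to within the constant factor $\max(c,\tfrac32)$. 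In either case, composing $\mathcal{A}$ with the polynomial-time construction $G\mapsto H$ yields a polynomial-time constant-factor estimate of $\omega$ on non-split graphs, contradicting the previous paragraph unless $P=NP$.

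I expect the main obstacle to be the bookkeeping around the additive ``$+1$'' in the even case together with the need to keep the clique-hardness on the right graph class; both are handled by the two observations above, namely the $\tfrac32$ bound (valid because non-split forces $\omega(G)\ge 2$) and the split-check reduction that confines the hardness to non-split graphs. A secondary point worth stating explicitly is that $t$ is a fixed constant, so $|V(H)|$ is polynomial in $|V(G)|$ and the reduction is genuinely polynomial-time; if one wanted a sharper $n^{1-\epsilon}$-type inapproximability rather than mere constant-factor hardness, one would additionally have to account for this size blow-up, but for the stated constant-factor claim it is irrelevant.
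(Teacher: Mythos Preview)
Your proposal is correct and follows essentially the same approach as the paper: both handle split graphs via Lemma~\ref{lem:SplitCheck}, feed non-split $G$ through the reduction of Theorem~\ref{thm:NPhardness}, and convert a hypothetical constant-factor approximation for $\equ_t(H)$ into one for $\omega(G)$, treating the odd and even $t$ cases separately. The only cosmetic differences are that the paper phrases $\mathcal{A}$ as returning a $t$-equidistant \emph{set} (from which it extracts a clique of size at least $\equ_t(H)/c-1$, yielding a $2c$-approximation) whereas you phrase it as a value estimate and absorb the additive $+1$ via the multiplicative bound $\omega(G)+1\le \tfrac32\omega(G)$; both arguments arrive at the same contradiction with the constant-factor inapproximability of \textsc{Max Clique}.
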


\begin{proof}
    For this purpose we show that if such an approximation algorithm exist then there exists a constant approximation for the maximum clique problem where $k$ is the number of vertices of the input graph.
    Now to prove use we use a similar argument to the proof of Theorem~\ref{thm:NPhardness}. 
    Let $G$ be the input graph of a maximum clique instance. We first use Lemma~\ref{lem:SplitCheck} to check whether $G$ is split or no. If $G$ is split then we can find its maximum clique in polynomial time. Hence we assume that $G$ is not split. Assume there exists a $c$-approximation for $\equ_t$ for some constant $c>1$.
    Now we use the same reduction as discussed in the proof of Theorem~\ref{thm:NPhardness}. We distinguish the following cases:
    \begin{itemize}
        \item \textbf{$t$ is odd:} Note that our reduction constructs an instance $H$ of $G$ such that the size of $H$ is polynomial in terms of size of $G$. Furthermore for every feasible solution of $G$ we can compute in polynomial time a feasible solution with the same size for $H$ and vice versa.
        Therefore given a $c$-approximation for $H$, we can obtain a $c$-approximation for $G$ as well.
        \item \textbf{$t$ is even:} Again our reduction constructs an instance $H$ of $G$ such that the size of $H$ is polynomial in terms of size of $G$. However now for every feasible solution of $H$ we can compute in polynomial time a feasible solution that is at most one unit smaller for $G$. Furthermore we know that $\omega(G)=\equ_t(H)+1$.  
        Now we can use the $c$-approximation algorithm for maximum $t$-equidistant set to compute a $t$-equidistant set $S$ of size $\equ_t(H)/c$. Then we can use this to find a clique $C$ of size at least $\equ_t(H)/c-1$ in $G$. Observe that $\equ_t(H)/c-1=(\omega(G)+1)/c-1$. Now as $G$ is not split it has at least one edge. Therefore we can compute a clique of size $\max\{2,(\omega(G)+1)/c-1\}>\omega(G)/2c$. Thus this leads to a $2c$-approximation for maximum clique problem.
        
    \end{itemize}
    However under the assumption of $P\neq NP$ there exists no approximation for the clique number within a constant factor (\cite{D2007}), hence the claim.    
\end{proof}

We now provide a proof for the NP-hardness of $\equ$. This follows from a fairly simple graph construction.

\begin{theorem}
        Let $G= (V,E)$ be a graph. Finding the equidistant number of $G$ is an NP-hard problem. 
    \end{theorem}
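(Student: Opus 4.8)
The plan is to reduce the maximum clique problem, which is NP-hard, to the computation of $\equ$. Given an instance $G=(V,E)$ of maximum clique with $n=\abs{V}$ vertices, I would build an auxiliary graph $H$ by taking $G$ together with a fresh clique $Q$ on $n$ new vertices, and joining every vertex of $Q$ to every vertex of $V$ (so each vertex of $Q$ becomes universal in $H$). This construction is clearly polynomial in the size of $G$. The idea is that the extra universal clique collapses the diameter of $H$ to at most $2$, which leaves only two candidate values of $t$ to consider, and simultaneously inflates the clique number so that the distance-$1$ term dominates the maximum defining $\equ(H)$.

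Concretely, I would establish three facts. First, $\diam(H)\le 2$: any two vertices of $Q$, or any vertex of $Q$ together with any vertex of $V$, are adjacent, and two non-adjacent vertices of $V$ are joined through any vertex of $Q$. Hence $\equ(H)=\max\{\equ_1(H),\equ_2(H)\}$. Second, since $\equ_1(H)=\omega(H^{[\sharp 1]})=\omega(H)$ and every vertex of $Q$ can be appended to any clique of $G$, one has $\equ_1(H)=\omega(H)=\omega(G)+n$. Third, because a pair of vertices lies at distance exactly $2$ in $H$ precisely when it is a non-adjacent pair of $V$ (every vertex of $Q$ is universal, hence never at distance $2$ from anything), a $2$-equidistant set of size at least $2$ is exactly an independent set of $G$, giving $\equ_2(H)=\alpha(G)\le n$, where $\alpha(G)$ denotes the independence number.

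Combining these, $\equ(H)=\max\{\omega(G)+n,\alpha(G)\}=\omega(G)+n$, since $\alpha(G)\le n<n+\omega(G)$. Therefore $\omega(G)=\equ(H)-n$ can be recovered in polynomial time from $\equ(H)$, so any polynomial-time algorithm computing $\equ$ would solve maximum clique, proving NP-hardness. I expect the only delicate point to be the verification that the maximum defining $\equ$ is attained at $t=1$: this is exactly where bounding the diameter by $2$ (to rule out all $t\ge 3$) and choosing the gadget clique large enough that $\omega(H)$ exceeds $\alpha(G)$ (to beat the $t=2$ term) are both needed. Everything else reduces to the two short distance computations above.
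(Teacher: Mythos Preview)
Your proposal is correct and uses exactly the same construction as the paper: the join $H = G \nabla K_n$. The paper's argument is marginally different in presentation---rather than computing $\diam(H)\le 2$ and identifying $\equ_2(H)=\alpha(G)$ exactly, it simply notes that for every $t>1$ a $t$-equidistant set in $H$ can contain at most one vertex of the added clique, hence $\equ_t(H)\le n+1\le \omega(H)$---but this is the same idea, and both routes lead to $\equ(H)=\omega(H)=\omega(G)+n$.
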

    \begin{proof}
        We will give a reduction of determining the clique number of $G$ to determining the equidistant number of a graph $H$.
        Let $G=(V,E)$ with $\abs{V} = n$, and let $K_n=(U,F)$ be the complete graph on $n$ vertices. Then consider $H = G \nabla K_n$, the join of $G$ and $K_n$. That is, $H$ has vertex set $V \cup U$ and edge set $E \cup F \cup \{(v,u): v \in V, u \in U\}$.
        First we show $\omega(H) \geq \equ_t(H)$ for all $t$, which implies $\equ(H) = \omega(H)$. For any $v \in V$ we have $U \cup \{v\}$ is a clique in $H$ of size $n+1$, hence $\omega(H) \geq n+1$. On the other hand, for $t >1$ we have that any $t$-equidistant set can contain at most 1 vertex in $U$. Hence $\equ_t(H) \leq n+1$, and thus $\equ(H) = \omega(H)$.
        Finally, we will show $\omega(H) = \omega(G) + n$. Clearly if we have a clique $C$ of size $m$ in $G$, then $C \cup U$ will be a clique of size $m+n$ in $H$, hence $\omega(H) \geq \omega(G) + n$. On the other hand, if $C'$ is a maximal clique of size $m'$ in $H$, then $U \subset C'$ because all vertices in $U$ are adjacent to all vertices in $V \cup U$. Thus $C = C' \setminus U$ will be a clique in $G$ of size $m'-n$, and hence  $\omega(H) \leq \omega(G) + n$.
        Thus, we have found $\equ(H) = \omega(H) = \omega(G) + n$, which implies that we can determine the clique number in polynomial time, if we are able to determine the equidistant number in polynomial time. Hence, determining the equidistant number is an NP-hard problem.
    \end{proof}

%%%%%%%%%%%%%%%%%%%%%%%%%%%%%%%%%%%%%%%%%%%%%%%%%%%%%%%%%%%
\section{The relation between the equidistant number and the $t$-independence number}\label{sec:equ_connect}
%%%%%%%%%%%%%%%%%%%%%%%%%%%%%%%%%%%%%%%%%%%%%%%%%%%%%%%%%%%
    A set $U$ of vertices of a  graph $G$ is called a \emph{$t$-independent set} if $d(u,v) > t$ for all $u,v \in U$. The \emph{$t$-independence number} of a graph $G$, denoted $\alpha_t$, is the size of the largest $t$-independent set of $G$. Clearly, the $t$-equidistant number is closely related to the $(t-1)$-independence number, but with the underlying set having stricter requirements. The following lemmas make this connection more precise.

    The following lemmas investigate the relation between the above parameters.

    \begin{lemma}\label{lem:eqwalpha}\cite{privatecommunication}
    For any graph $G$ with clique number $\omega (G)$ we have $\omega (G) = \equ _1(G) \leq \equ (G) \leq \max \{ \omega (G),\alpha (G)\} $. For graphs 
    with diameter two we have the equality $\equ (G) = \max \{ \omega (G),\alpha (G)\} $.
    \end{lemma}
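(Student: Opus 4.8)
The plan is to establish the chain one relation at a time and then upgrade the last inequality to an equality using the diameter-two hypothesis. I would begin with the leftmost equality: a $1$-equidistant set is, by definition, a set of vertices that are pairwise at distance exactly $1$, i.e.\ pairwise adjacent, which is precisely a clique. Hence a largest $1$-equidistant set is a largest clique and $\equ_1(G) = \omega(G)$. The next inequality $\equ_1(G) \le \equ(G)$ is then immediate, since $\equ(G) = \max\{\equ_t(G) : 1 \le t \le \diam(G)\}$ is a maximum over a range that includes $t = 1$.

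For the upper bound $\equ(G) \le \max\{\omega(G),\alpha(G)\}$, I would bound each $\equ_t(G)$ individually and then take the maximum over $t$. The case $t = 1$ is already handled, since $\equ_1(G) = \omega(G) \le \max\{\omega(G),\alpha(G)\}$. For $t \ge 2$, the key observation is that if $S$ is a $t$-equidistant set with $t \ge 2$, then every pair of distinct vertices of $S$ lies at distance $t \ge 2$ and is therefore non-adjacent; thus $S$ is an independent set and $|S| \le \alpha(G)$. Consequently $\equ_t(G) \le \alpha(G)$ for all $t \ge 2$, and taking the maximum over $1 \le t \le \diam(G)$ gives $\equ(G) \le \max\{\omega(G),\alpha(G)\}$.

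It remains to prove the reverse inequality under the diameter-two assumption. From the part already established we have $\equ(G) \ge \equ_1(G) = \omega(G)$, so it suffices to show $\equ(G) \ge \alpha(G)$. Here I would use the crucial feature of diameter-two graphs: two distinct vertices are non-adjacent \emph{if and only if} they are at distance exactly $2$ (distance is at most $2$ by the diameter bound, and at least $2$ by non-adjacency). Hence any independent set is itself a $2$-equidistant set, giving $\alpha(G) \le \equ_2(G) \le \equ(G)$. Combining with $\equ(G) \ge \omega(G)$ yields $\equ(G) \ge \max\{\omega(G),\alpha(G)\}$, which together with the general upper bound forces equality.

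The argument is elementary and I do not expect a genuine obstacle; the only subtle point is the asymmetry in the distance/adjacency correspondence. In general, being at distance $t \ge 2$ implies non-adjacency (one direction only), which is exactly what drives the upper bound; the converse implication, that non-adjacency forces distance exactly $2$, is false in general but holds precisely when $\diam(G) = 2$. This is the single place where the diameter-two hypothesis is used essentially, and I would make sure to flag that the independent set is recovered as a $2$-equidistant set only because larger distances are impossible.
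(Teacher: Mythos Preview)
Your argument is correct and is the natural elementary proof. Note that the paper does not actually supply a proof of this lemma; it is stated with a citation to \cite{privatecommunication} and used as a black box, so there is no paper proof to compare against. Your write-up would serve perfectly well as the missing justification.
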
 	

  A straightforward extension of Lemma \ref{lem:eqwalpha}, using $\alpha_t(G)$, is as follows.
  
    \begin{lemma}\label{lem:eqwalphat}
        Let $G$ be a graph. Then, at least one of the following holds
        \begin{enumerate}
            \item $\equ(G) = \equ_i(G)$, for some $i \in [1,t]$,
            \item $\equ(G) \leq \alpha_t(G)$.
        \end{enumerate}
    \end{lemma}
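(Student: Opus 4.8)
The plan is to unwind the definition of the equidistant number and then split into two cases according to which diameter value realizes the maximum. By definition, $\equ(G) = \max\{\equ_j(G) : 1 \le j \le \diam(G)\}$, so there is some index $j$ with $1 \le j \le \diam(G)$ for which $\equ(G) = \equ_j(G)$. Fix such a $j$ and choose a $j$-equidistant set $S \subseteq V(G)$ attaining the maximum, so that $|S| = \equ(G)$ and $d(u,v) = j$ for all distinct $u,v \in S$.

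The case analysis is then immediate. If $j \le t$, then taking $i = j$ gives exactly conclusion (1), namely $\equ(G) = \equ_i(G)$ with $i \in [1,t]$. Otherwise $j > t$, and here I would observe that every pair of distinct vertices of $S$ lies at distance exactly $j > t$, hence at distance strictly greater than $t$. This is precisely the defining property of a $t$-independent set, so $S$ is $t$-independent, and therefore $\equ(G) = |S| \le \alpha_t(G)$, which is conclusion (2). Since one of $j \le t$ or $j > t$ must hold, at least one of the two conclusions holds, completing the argument.

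I do not expect a genuine obstacle here: the whole content is the observation that an exact-distance-$j$ set with $j > t$ is automatically a $t$-independent set, and everything else is bookkeeping on the definition of $\equ(G)$. The only point worth a sentence of care is the degenerate regime $t \ge \diam(G)$, where $j \le \diam(G) \le t$ forces us into case (1) trivially, so the statement remains vacuously consistent. This is why I regard the result as a ``straightforward extension'' of Lemma~\ref{lem:eqwalpha}, as the text already anticipates.
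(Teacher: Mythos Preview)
Your argument is correct and is precisely the intended one: the paper does not actually spell out a proof of Lemma~\ref{lem:eqwalphat}, merely introducing it as ``a straightforward extension of Lemma~\ref{lem:eqwalpha}, using $\alpha_t(G)$''. Your case split on whether the maximizing index $j$ satisfies $j\le t$ or $j>t$, together with the observation that a $j$-equidistant set with $j>t$ is automatically $t$-independent, is exactly the content behind that remark.
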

    % \begin{proof}
    %     Let $t^*$ be such that $\equ_{t^*} = \equ(G)$, and let $U \subset V$ be a $t^*$-equidistant set such that $\abs{U} =\equ_{t^*} = \equ(G)$. If $t^*\in[1,t]$, then condition 1 is satisfied. If $t^*> t$, then $U$ is a distance $t$-independent set, and hence $\alpha_t(G) \geq \abs{U} = \equ(G)$.
    % \end{proof}

    \begin{lemma}\label{lem:equ_alpha_ineq}
        Let $G$ be a graph. Then for $t,t^* \in \mathbb{N}^+$ with $t < t^*$ we have $\alpha_{t}(G)\geq \equ_{t^*}(G)$. 
    \end{lemma}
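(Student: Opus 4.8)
The plan is to show that every $t^*$-equidistant set in $G$ is automatically a $t$-independent set, after which the inequality follows simply by comparing the sizes of the respective extremal sets. First I would let $S$ be a maximum $t^*$-equidistant set in $G$, so that $|S| = \equ_{t^*}(G)$ and, by definition, $d(u,v) = t^*$ for every pair of distinct vertices $u,v \in S$.

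Next I would use the hypothesis $t < t^*$. Since graph distances are integers, $t < t^*$ gives $t^* \geq t+1$, and therefore for every pair of distinct $u,v \in S$ we have $d(u,v) = t^* \geq t+1 > t$. This is precisely the defining condition of a $t$-independent set, namely $d(u,v) > t$ for all distinct $u,v$. Hence $S$ is a $t$-independent set, and by maximality of the $t$-independence number we conclude $\alpha_t(G) \geq |S| = \equ_{t^*}(G)$, as required.

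I do not expect any real obstacle here, as the statement is essentially a direct consequence of the two definitions; the only point worth isolating is that the \emph{strict} inequality $t < t^*$ is exactly what promotes the equality $d(u,v) = t^*$ to the strict inequality $d(u,v) > t$ needed for $t$-independence. For completeness I would remark that the hypothesis $t < t^*$ cannot be relaxed to $t \leq t^*$: a $t$-equidistant set has all pairwise distances equal to $t$, which does not satisfy $d(u,v) > t$, so it need not be $t$-independent.
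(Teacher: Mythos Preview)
Your proof is correct and follows essentially the same approach as the paper: show that any $t^*$-equidistant set is a $t$-independent set (since $d(u,v)=t^*>t$), and conclude by comparing maxima. Your additional remark on why the strict inequality $t<t^*$ is necessary is a nice clarification not present in the paper's one-line argument.
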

    \begin{proof}
        If $U$ is a $t^*$-equidistant set, then all its vertices are at pairwise distance at least $t$, meaning it is also a $t$-independent set. Hence $\alpha_{t}(G)\geq \equ_{t^*}(G)$.
    \end{proof}

    \begin{lemma}\label{lem:equ_alpha_diameter}
        Let $G$ be a graph. If $t \geq \diam(G)$, then $\alpha_{t-1}(G) = \equ_t(G)$.
    \end{lemma}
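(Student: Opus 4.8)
The plan is to show that, under the hypothesis $t \geq \diam(G)$, the two families of vertex sets defining these parameters actually coincide: every $(t-1)$-independent set is a $t$-equidistant set and vice versa. Once the collections of admissible sets are identical, a largest member of one is a largest member of the other, and equality of the parameters $\alpha_{t-1}(G) = \equ_t(G)$ follows immediately.

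First I would unwrap the two definitions and exploit the integrality of graph distances. A set $U$ is $(t-1)$-independent exactly when $d(u,v) > t-1$ for all distinct $u,v \in U$; since $d$ takes integer values, this is equivalent to $d(u,v) \geq t$. A set $S$ is $t$-equidistant exactly when $d(u,v) = t$ for all distinct $u,v \in S$. Next I would invoke the diameter hypothesis in the single place it is needed: because $t \geq \diam(G)$, every pair of vertices satisfies $d(u,v) \leq \diam(G) \leq t$. This upper bound is what forces the two notions to collapse onto each other.

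The set-equality then follows in both directions. Given a $(t-1)$-independent set $U$, each distinct pair satisfies both $d(u,v) \geq t$ (from independence plus integrality) and $d(u,v) \leq t$ (from the diameter bound), hence $d(u,v) = t$, so $U$ is $t$-equidistant. Conversely, any $t$-equidistant set $S$ trivially has $d(u,v) = t > t-1$ for distinct pairs, so it is $(t-1)$-independent. Since the two collections coincide, their maxima agree, giving $\alpha_{t-1}(G) = \equ_t(G)$.

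I do not anticipate a genuine obstacle here; the argument is short, and the only subtle point to flag is the use of integrality of distances, which upgrades the strict inequality $d(u,v) > t-1$ to $d(u,v) \geq t$ so that it meets the diameter bound exactly at $t$. It is also worth noting that this statement is consistent with Lemma~\ref{lem:equ_alpha_ineq}, which already yields $\alpha_{t-1}(G) \geq \equ_t(G)$ unconditionally; the real content of the present lemma is therefore the reverse inequality $\alpha_{t-1}(G) \leq \equ_t(G)$, which is supplied entirely by the diameter bound via the inclusion of $(t-1)$-independent sets into $t$-equidistant sets.
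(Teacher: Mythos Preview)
Your proposal is correct and follows essentially the same approach as the paper: both arguments combine the lower bound $d(u,v)\geq t$ (from $(t-1)$-independence and integrality) with the upper bound $d(u,v)\leq\diam(G)\leq t$ to force $d(u,v)=t$. The only cosmetic difference is that the paper splits into the cases $t>\diam(G)$ (where both parameters equal $1$) and $t=\diam(G)$, while you treat $t\geq\diam(G)$ uniformly via the set-equality; your organization is arguably cleaner, but the mathematical content is identical.
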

    \begin{proof}
        The case $t > \diam(G)$ is trivial, as in this case no two vertices are far enough apart to form a $(t-1)$-independent set or $t$-equidistant set, and hence $\alpha_{t-1}(G) = 1 = \equ_t(G)$.
        
        Next is $t=\diam(G)$. $\alpha_{t-1}(G) \geq \equ_t(G)$ follows from Lemma \ref{lem:equ_alpha_ineq}. To prove the converse, let $U$ be a $(t-1)$-independent set. Then for all $u,v \in U$ we have $d(u,v) > t-1$, additionally, since $t=\diam(G)$, we have $d(u,v) \leq \diam(G) = t$. Thus $d(u,v) = t$ for all $u,v \in U$, and thus $U$ is a $t$-equidistant set. Since this holds for arbitrary $U$, we conclude $\alpha_{t-1}(G) \leq \equ_t(G)$, and thus $\alpha_{t-1}(G)= \equ_t(G)$.
    \end{proof}

    \begin{lemma}
        Let $G$ be a graph. Then there exists a $t^* \in \mathbb{N}^+$ such that for all $t < t^*$ we have $\alpha_t(G) \geq \equ(G)$.
    \end{lemma}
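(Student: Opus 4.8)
The plan is to locate the index at which the equidistant number is attained and use it as the threshold $t^*$. By definition, $\equ(G) = \max\{\equ_i(G) : 1 \le i \le \diam(G)\}$, so this maximum over a finite set is attained and there exists some $s \in [1,\diam(G)]$ with $\equ_s(G) = \equ(G)$. To make the conclusion as strong as possible I would take $s$ to be the \emph{largest} such index, and then simply set $t^* = s$.

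The key step is to invoke Lemma~\ref{lem:equ_alpha_ineq}, which asserts that $\alpha_t(G) \ge \equ_{t^*}(G)$ whenever $t < t^*$ (both positive integers). Applying it with the second parameter equal to our chosen $s$, every positive integer $t < s$ satisfies $\alpha_t(G) \ge \equ_s(G) = \equ(G)$, which is precisely the desired conclusion. At the level of sets the mechanism is transparent: a $t$-equidistant witness for $\equ_s(G)$ has all pairwise distances equal to $s > t$, hence in particular greater than $t$, so it is automatically a $t$-independent set; this is exactly the content recorded in Lemma~\ref{lem:equ_alpha_ineq}, and the inequality follows with no further work.

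I do not expect a genuine obstacle here, since the statement is essentially a repackaging of Lemma~\ref{lem:equ_alpha_ineq} once the correct witness $t^*$ is identified. The single point that deserves a word of care is the degenerate case $s = 1$ (for example when $G$ is complete, so that $\equ(G) = \omega(G) = \equ_1(G)$): then $t^* = 1$ and there is no positive integer $t < t^*$, so the claim holds vacuously. Flagging this case is worthwhile because it also shows that the existence statement cannot in general be strengthened to include $t = 1$: indeed $\alpha_1(G) = \alpha(G)$ can be far smaller than $\equ(G)$, so some choices of $G$ genuinely force $t^*$ to be small.
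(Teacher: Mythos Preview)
Your proof is correct and follows essentially the same approach as the paper: pick $t^*$ to be an index at which $\equ(G)=\equ_{t^*}(G)$ and then apply Lemma~\ref{lem:equ_alpha_ineq}. Your additional remarks about choosing the largest such index and handling the vacuous case $t^*=1$ are sound but not needed for the argument.
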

    \begin{proof}
        By definition, there is a $t^* \in \mathbb{N}^+$ such that $\equ(G) = \equ_{t^*}(G)$. The desired inequality follows from applying Lemma \ref{lem:equ_alpha_ineq}.
    \end{proof}

    By way of the above lemmas, we are able to apply eigenvalue bounds for the independence number, such as the ones seen in \cite{ACF2019} and its optimization from \cite{ACFNS2022}, to the equidistant number. We will investigate this further in Section \ref{sec:spec_equ}.

  Thanks to Lemma \ref{lem:equ_alpha_ineq}, we have an easy way to apply bounds on $\alpha_{t-1}$ to $\equ_t$. However, a quick look at the two definitions suggests that $\alpha_{t-1}$ can in general be significantly larger than $\equ_t$. Thus, next we will investigate how this gap behaves, and we will also look at the extremal cases, i.e.\ the graphs of given order for which $\alpha_{t-1} -\equ_t$ is maximal.

    First, we define the main quantity we are interested in:
    \begin{definition}
        Let $k \geq 1$, $t\geq 2$, we denote the maximum distance between $\alpha_{t-1}$ and $\equ_t$ over all connected graphs of order $k$ by:
        \begin{equation*}
            AE(k,t) = \max\{\alpha_{t-1}(G) - \equ_t(G) : G \text{ has order } k\}.
        \end{equation*}

    \end{definition}

    In particular, we will focus on studying
    \begin{equation}\label{eq:AE_lim}
            \lim_{k \rightarrow \infty} \frac{AE(k,t)}{k}.
            \end{equation}
 
    The following result shows that \eqref{eq:AE_lim} is entirely determined by how fast the $t$-independence number grows relative to the amount of vertices.

    \begin{theorem}\label{thm:AE_growth}        \begin{equation*}
    \lim_{k \rightarrow \infty} \frac{AE(k,t)}{k} = \lim_{k \rightarrow \infty}\max\left\{\frac{\alpha_{t-1}(G)}{k}: G \text{ is of order } k\right\}.
\end{equation*}
    \end{theorem}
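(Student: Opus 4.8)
The plan is to write $M(k)=\max\{\alpha_{t-1}(G): G \text{ connected of order } k\}$ and set $\mu:=\limsup_{k}M(k)/k$, reading the right-hand maximum as ranging over connected graphs (consistent with the definition of $AE(k,t)$); the goal is then to show $\lim_k AE(k,t)/k = \mu = \lim_k M(k)/k$, both limits existing. The upper bound is immediate: every graph has $\equ_t(G)\ge 1$ (a single vertex is a $t$-equidistant set) and, by Lemma~\ref{lem:equ_alpha_ineq}, $\equ_t(G)\le\alpha_{t-1}(G)$; hence $\alpha_{t-1}(G)-\equ_t(G)\le\alpha_{t-1}(G)-1$, so $AE(k,t)\le M(k)-1$ and $\limsup_{k}AE(k,t)/k\le\mu$.

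The substance is the reverse inequality, for which I must exhibit connected graphs whose $(t-1)$-independence number is nearly extremal while their $t$-equidistant number is only sublinear in the order. I would fix $k_0$, take a connected $G_{k_0}$ attaining $\alpha_{t-1}(G_{k_0})=M(k_0)$ with a maximum $(t-1)$-independent set $S_0$, and form $G=G(k_0,r)$ from $r$ disjoint copies $C_1,\dots,C_r$ of $G_{k_0}$ by joining $C_i$ to $C_{i+1}$ (for $1\le i\le r-1$) with a \emph{bridge}: a path of length $t+1$ whose endpoints are identified with one vertex of $C_i$ and one of $C_{i+1}$. This $G$ is connected and has order $k=rk_0+(r-1)t$.

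Two estimates drive the argument. Since bridges create no shortcuts, intra-copy distances are preserved and any two vertices in distinct copies are at distance at least $t+1$; hence $\bigcup_{i=1}^{r}S_0^{(i)}$ is $(t-1)$-independent and $\alpha_{t-1}(G)\ge rM(k_0)$. For the equidistant number, the key observation is that a $t$-equidistant set $U$ has diameter $t$ and so lies in the ball $B(v,t)$ about any of its vertices $v$; because consecutive copies are at distance $>t$, such a ball can meet at most one copy (if $v$ lies in a copy) or the two flanking copies (if $v$ lies on a bridge), together with $O(t)$ bridge vertices, giving $|B(v,t)|\le c_0$ for a constant $c_0=c_0(k_0,t)$ independent of $r$. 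Thus $\equ_t(G)\le c_0$. I expect this ball-size bound to be the main obstacle, so I would isolate it as a short lemma and verify the cases $v\in C_i$ and $v$ on a bridge separately, using $t+1>t$ to rule out reaching non-adjacent copies.

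Combining the estimates, for fixed $k_0$ and $r\to\infty$,
\begin{equation*}
\frac{AE(k,t)}{k}\ \ge\ \frac{rM(k_0)-c_0}{rk_0+(r-1)t}\ \xrightarrow[r\to\infty]{}\ \frac{M(k_0)}{k_0+t},
\end{equation*}
so $\liminf_{k\to\infty}AE(k,t)/k\ge M(k_0)/(k_0+t)$ for every $k_0$. Choosing $k_0\to\infty$ along a subsequence with $M(k_0)/k_0\to\mu$ gives $M(k_0)/(k_0+t)=\big(M(k_0)/k_0\big)/(1+t/k_0)\to\mu$, whence $\liminf_{k}AE(k,t)/k\ge\mu$. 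Together with the upper bound this forces $\lim_k AE(k,t)/k=\mu$, and then $M(k)\ge AE(k,t)+1$ yields $\liminf_k M(k)/k\ge\mu=\limsup_k M(k)/k$, so $\lim_k M(k)/k$ exists and equals $\mu$ as well. Two routine points remain: realizing \emph{every} sufficiently large order $k$ (not only those of the form $rk_0+(r-1)t$) by appending extra copies and padding a single bridge, whose $O(k_0+t)$ effect on both numerator and order is negligible after dividing by $k$; and, if one prefers to establish existence of $\lim_k M(k)/k$ directly, invoking Fekete's lemma via the near-superadditivity $M(a+b+t)\ge M(a)+M(b)$ witnessed by one bridge.
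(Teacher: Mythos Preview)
Your proposal is correct and follows essentially the same approach as the paper: connect $r$ copies of an extremal graph for $\alpha_{t-1}$ in a line by paths of length roughly $t$, observe that $\alpha_{t-1}$ grows linearly in $r$ while $\equ_t$ stays bounded by a ball-size constant, and pass to the limit first in $r$ and then in the order of the base graph. Your treatment is in fact slightly more careful than the paper's (you use bridges of length $t{+}1$ rather than $t$, making the ball argument cleaner, and you address existence of the limits and realization of arbitrary orders, which the paper leaves implicit).
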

    \begin{proof}
        The inequality
        \begin{equation*}
            \lim_{k \rightarrow \infty} \frac{AE(k,t)}{k} \leq \lim_{k \rightarrow \infty}\max\left\{\frac{\alpha_{t-1}(G)}{k}: G \text{ is of order } k\right\}.
        \end{equation*}
        follows trivially from $\alpha_{t-1} - \equ_t \leq \alpha_{t-1}$.

        For the other inequality, fix some $t \in \mathbb{N}^+$. Now choose the graphs $G(n,t)$ such that $G(n,t)$ has $n$ vertices, and
        \begin{equation*}
            \alpha_{t-1}(G(n,t)) = \max\left\{\alpha_{t-1}(G): G \text{ is of order } n\right\}.
        \end{equation*}
        Next, define the graphs $H(m,n,t)$ by connecting $m$ copies of $G(n,t)$ in a line, by way of paths of length $t$. This graph has $m(n+t)-t$ vertices, for the $m$ copies of $G(n,t)$ that have $n$ vertices, and the $m-1$ paths of length $t$ connecting them. Since none of these copies are within distance $t$, we can take the union of the largest $t$-independent sets of the copies of $G(n,t)$ as a $t$-independent set of $H(m,n,t)$. Hence, $\alpha_{t-1}(H(m,n,t)) \geq m \cdot \alpha_{t-1}(G(n,t))$.

        Furthermore, note that $\equ_t(H(m,n,t))$ will be bounded by some constant $C_{n,t}$ which does not depend on $m$. This is because for any arbitrary vertex $v \in V(H(m,n,t))$, the amount of vertices within distance $t$ (and subsequently those at exactly $t$) does not grow as you add more copies of $G(n,t)$ in the way described here.

        Now, fix an $n$ and observe
        \begin{align*}
            \lim_{k \rightarrow \infty} \frac{AE(k,t)}{k} &\geq \lim_{m \rightarrow \infty} \frac{\alpha_{t-1}(H(m,n,t))-\equ_t(H(m,n,t))}{\abs{V(H(m,n,t))}}\\& \geq \lim_{m \rightarrow \infty} \frac{m\cdot \alpha_{t-1}(G(n,t))-C_{n,t}}{m(n+t)-t} \\
            &= \frac{\alpha_{t-1}(G(n,t))}{n+t}.
        \end{align*}

        Since this holds for arbitrary $n$, it must also hold in the limit.
        \begin{align*}
            \lim_{k \rightarrow \infty} \frac{AE(k,t)}{k} &\geq \lim_{n \rightarrow \infty} \frac{\alpha_{t-1}(G(n,t))}{n+t} = \lim_{n \rightarrow \infty} \frac{\alpha_{t-1}(G(n,t))}{n} \\
            &= \lim_{n \rightarrow \infty}\max\left\{\frac{\alpha_{t-1}(G)}{n}: G \text{ is of order } n\right\}
        \end{align*}
        where this last equality is by definition of $G(n,t)$.
    \end{proof} 

    In view of Theorem \ref{thm:AE_growth}, it would be of interest to know how 
    \begin{equation}\label{eq:alpha_lim}
         \lim_{k \rightarrow \infty} \max\left\{\frac{\alpha_t(G)}{k}: G \text{ is of order } k\right\}
    \end{equation}
    behaves. For $t=1$, the classical independence number, we know this limit is equal to $1$, take for instance the star graphs $S_n$. For general $t$, we determine this limit in Proposition \ref{prop:alpha_lim}
    
    \begin{proposition}\label{prop:alpha_lim}
        Let $t \leq 2k-1$. Then:
        \begin{equation*}
            \lim_{k \rightarrow \infty} \max\left\{\frac{\alpha_t(G)}{k}: G \text{ is of order } k\right\} = \frac{1}{\left\lfloor\frac{t+1}{2}\right\rfloor}.
        \end{equation*}
    \end{proposition}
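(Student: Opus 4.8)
The plan is to pin down the limit by sandwiching it between two matching asymptotic bounds: an upper bound that holds for \emph{every} connected graph, obtained by a ball-packing argument, and a lower bound realized by an explicit family of ``spider'' graphs. Throughout I write $r=\lfloor t/2\rfloor$ and rely on the two parity facts $2r\le t$ and $2(r+1)>t$, which hold for both parities of $t$ and drive the two directions.

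For the upper bound, fix a connected graph $G$ of order $k$ and a $t$-independent set $S$. For each $u\in S$ let $B_r(u)=\{w:d(u,w)\le r\}$. If some $w$ lay in $B_r(u)\cap B_r(v)$ for distinct $u,v\in S$, then $d(u,v)\le d(u,w)+d(w,v)\le 2r\le t$, contradicting $d(u,v)>t$; hence the balls $\{B_r(u)\}_{u\in S}$ are pairwise disjoint. If $|S|\ge2$, each $u\in S$ has eccentricity at least $t+1>r$, so a geodesic from $u$ to a farthest vertex hits every distance $0,1,\dots,r$ and thus $|B_r(u)|\ge r+1$. Summing over $S$ gives $k\ge|S|(r+1)$, i.e.\ $\alpha_t(G)\le k/(r+1)$; the case $|S|\le1$ is covered by $\alpha_t(G)=1\le k/(r+1)$ once $k\ge r+1$. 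Hence for all large $k$ and every connected $G$ of order $k$ we get $\alpha_t(G)/k\le 1/(r+1)$, so
\[
\lim_{k\to\infty}\max\Bigl\{\tfrac{\alpha_t(G)}{k}:|V(G)|=k\Bigr\}\le\frac{1}{\lfloor t/2\rfloor+1}.
\]

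For the matching lower bound I would construct, for each $m$, the spider $T_m$ consisting of a centre $c$ together with $m$ legs, each leg being a path of length $\ell:=r+1$ attached to $c$. The $m$ leaves form a set $S$ with $d(a_i,a_j)=2\ell=2(r+1)>t$ for all $i\ne j$, so $S$ is $t$-independent and $\alpha_t(T_m)\ge m$. As $T_m$ has $k=1+m\ell$ vertices, its density $m/(1+m\ell)$ tends to $1/\ell$, giving
\[
\lim_{k\to\infty}\max\Bigl\{\tfrac{\alpha_t(G)}{k}:|V(G)|=k\Bigr\}\ge\frac{1}{\lfloor t/2\rfloor+1}.
\]
Combining the two bounds identifies the limit as $1/(\lfloor t/2\rfloor+1)$, which coincides with the displayed value for odd $t$; for even $t$ my ball-packing bound forces the denominator $\lfloor t/2\rfloor+1=\tfrac{t}{2}+1$ rather than $\lfloor(t+1)/2\rfloor$.

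The routine parts are the two constructions; the one point demanding care---and the likely source of a parity slip---is the interplay in the upper bound between \emph{disjointness} (which needs $2r\le t$, forcing $r=\lfloor t/2\rfloor$) and \emph{ball size} (which needs the eccentricity bound to produce $r+1$ vertices per ball). Both must be verified for even and odd $t$ simultaneously, and one should separately dispose of the degenerate case $|S|\le1$ and note that the hypothesis $t\le 2k-1$ only ensures an order-$k$ graph can host two vertices at distance exceeding $t$, so it affects nothing in the limit.
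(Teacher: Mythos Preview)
Your argument is correct and follows the same two-step strategy as the paper (disjoint balls around the independent vertices for the upper bound, a spider/extended-star construction for the lower bound). The parity discrepancy you flag at the end is not a slip on your part: your value $1/(\lfloor t/2\rfloor+1)$ is the correct limit, and the paper's displayed value $1/\lfloor(t+1)/2\rfloor$ is in error for even $t$.

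Two things go wrong in the paper when $t$ is even. First, in the upper-bound step the paper uses only $d(u_i,u_{i'})\ge t$ rather than $d(u_i,u_{i'})\ge t+1$, which is what $t$-independence (strict inequality $d>t$) actually gives; this costs exactly one in the usable ball radius, yielding $\lfloor(t-1)/2\rfloor$ where your argument correctly reaches $\lfloor t/2\rfloor$. Second, and decisively, the paper's lower-bound construction fails outright for even $t$: in the extended star with legs of length $\lfloor(t+1)/2\rfloor=t/2$, the leaves are at pairwise distance exactly $t$, not strictly greater, so they do \emph{not} form a $t$-independent set. Your choice of leg length $\lfloor t/2\rfloor+1$ repairs this. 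A quick sanity check at $t=2$ confirms the point: the paper's formula gives a limit of $1$, but any connected graph with a $2$-independent set of size at least two has disjoint closed neighbourhoods around those vertices, forcing $\alpha_2\le k/2$; your formula gives $1/2$, realised asymptotically by spiders with legs of length $2$.

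So your proof stands as written; it is the statement of the proposition that needs amending.
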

    \begin{proof}
        Let $G=(V,E)$ be a graph of order $k$ and let $\alpha_t$ be its $t$-independence number. We show that 
        \begin{equation*}\label{eq:alpha_n_upper}
            \frac{\alpha_t}{k} \leq \frac{1}{\left\lfloor\frac{t+1}{2}\right\rfloor}.
        \end{equation*}    
        Clearly if $\alpha_t = 1$ the above is already satisfied, so assume $\alpha_t > 1$.    
        Let $U = \{u_1,\hdots,u_{r}\} \subset V$ be a $t$-independent set of size $r = \alpha_t$. Now for any $i \in [1,r]$ we have the following: Since $G$ is connected, and we assumed $r > 1$, we know that for every $u_i \in U$, there must be a vertex $v \in U$ such that $d(u_i,v) > t$. Hence, there must also be vertices $v_{i,1},\hdots,v_{i,{\left\lfloor\frac{t-1}{2}\right\rfloor}} \in V$ such that $d(u_i,v_{i,j}) = j$. Such $v
        _{i,j}$ exist for all $i \in [1,r]$. Suppose we have $i,i'\in[1,r]$ such that $i \neq i'$, then using the triangle inequality and the fact that $U$ is a $t$-independent set we find:
        \begin{equation*}
            t \leq d(u_i,u_{i'}) \leq d(u_i,v_{i,j}) + d(u_{i'},v_{i,j}) = j + d(u_{i'},v_{i,j}) \leq \left\lfloor\frac{t-1}{2}\right\rfloor +  d(u_{i'},v_{i,j}),
        \end{equation*}
         Hence, $d(u_{i'},v_{i,j}) > \left\lfloor\frac{t-1}{2}\right\rfloor$ for any $i' \neq i$. This means that if at least one of $i \neq i'$ or $j \neq j'$, then $v_{i,j} \neq v_{i',j'}$. But now we have identified $r(\left\lfloor\frac{t-1}{2}\right\rfloor+1)$ unique vertices in $G$, meaning $k \geq r(\left\lfloor\frac{t-1}{2}\right\rfloor+1) = r\left\lfloor\frac{t+1}{2}\right\rfloor$, and thus
        \begin{equation*}
            \frac{r}{k} \leq \frac{r}{ r\left\lfloor\frac{t+1}{2}\right\rfloor} = \frac{1}{\left\lfloor\frac{t+1}{2}\right\rfloor}.
        \end{equation*}
        To show that this upper bound is attained (asymptotically), consider the extended star graph $ES(n,m)$, which is a graph with a central vertex, to which $n-1$ paths of length $m$ are connected. Note that $ES(n,m)$ has $(n-1)m+1$ vertices. Now consider $ES(n,\left\lfloor\frac{t+1}{2}\right\rfloor)$ and observe that the outermost vertices of each path form a $t$-independent set. Hence, we find
        \begin{align*}
            \lim_{k \rightarrow \infty} \max\left\{\frac{\alpha_t(G)}{k}: G \text{ is of order } k\right\} &\geq \lim_{n \rightarrow \infty} \frac{\alpha_t(ES(n,\left\lfloor\frac{t+1}{2}\right\rfloor))}{\abs{V(ES(n,\left\lfloor\frac{t+1}{2}\right\rfloor))}} \\
            &= \lim_{n \rightarrow \infty} \frac{n-1}{(n-1)\left\lfloor\frac{t+1}{2}\right\rfloor+1}= \frac{1}{\left\lfloor\frac{t+1}{2}\right\rfloor}. 
      \qedhere  \end{align*}
    \end{proof}
    
%%%%%%%%%%%%%%%%%%%
\section{Eigenvalue bounds for the ($t$-)equidistant number}\label{sec:spec_equ}
%%%%%%%%%%%%%%%%%%%

We have seen in Section \ref{sec:complexity} that the $t$-equidistant number is hard to compute. Thus, it makes sense to derive bounds for this parameter using eigenvalues, since these can be computed in polynomial time. In particular, we will investigate how known bounds on $\alpha_t$ and $\omega$ can be used to derive bounds on $\equ_t$ and $\equ$. In addition, we will introduce a completely different bound using the distance matrix spectra (Theorem \ref{thm:equt_distance_bound}). We will also show graph classes that attain equality for our bounds and show that our
bounds compare favorably to a previous bound on the $t$-equidistant number.

%%%%%%%%%%%%%%%%%%%%%%%%%%%%%%%%%%%%
\subsection{Bounds on $\equ_t$}
%%%%%%%%%%%%%%%%%%%%%%%%%%%%%%%%%%%%%
As a point of comparison, we will use the following bound on $\equ_t$, which is a straightforward generalization of a bound on $\equ_2$ by Foucaud et al. \cite[Theorem 2.1]{foucaud_cliques_2021}.
    \begin{proposition} \label{prop:eqt_degree_bound}
        Let $G$ be a graph with maximum degree $\Delta$. Then
        \begin{equation}\label{eq:eqt_degree_bound}
            \equ_t(G) \leq \Delta(\Delta-1)^{t-1}+1.
        \end{equation}
    \end{proposition}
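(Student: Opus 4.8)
The plan is to reduce the statement to a standard Moore-type counting bound on the number of vertices lying at a fixed distance from a single vertex. First I would let $S$ be a maximum $t$-equidistant set, so that $|S| = \equ_t(G)$, and fix an arbitrary vertex $v \in S$. By the definition of a $t$-equidistant set, every other vertex of $S$ lies at distance exactly $t$ from $v$; hence $\equ_t(G) - 1 = |S \setminus \{v\}|$ is at most the number of vertices at distance exactly $t$ from $v$ in $G$. Phrased in terms of the exact distance power, this is nothing more than the trivial clique bound $\omega(G^{[\sharp t]}) \leq (\text{maximum degree of } G^{[\sharp t]}) + 1$, where the degree of $v$ in $G^{[\sharp t]}$ counts precisely the vertices at distance $t$ from $v$ in $G$.

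The key step is then to bound, for any vertex $v$ in a graph of maximum degree $\Delta$, the number $n_i(v)$ of vertices at distance exactly $i$ from $v$. I would argue by a breadth-first layering. We have $n_1(v) \leq \Delta$, since $v$ has at most $\Delta$ neighbours. For $i \geq 1$, every vertex $w$ at distance $i+1$ from $v$ is adjacent to its predecessor on a shortest $v$–$w$ path, which lies at distance $i$; conversely each vertex at distance $i$ already has at least one neighbour at distance $i-1$, and hence at most $\Delta-1$ neighbours in the next layer. This yields the recursion $n_{i+1}(v) \leq (\Delta-1)\,n_i(v)$, and unrolling it from $n_1(v) \leq \Delta$ gives $n_t(v) \leq \Delta(\Delta-1)^{t-1}$ for all $t \geq 1$.

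Combining the two observations gives
\begin{equation*}
\equ_t(G) - 1 \leq n_t(v) \leq \Delta(\Delta-1)^{t-1},
\end{equation*}
and adding $1$ yields the claimed inequality \eqref{eq:eqt_degree_bound}. I do not expect any genuine obstacle here, as the argument is a routine layer count; the only points requiring a little care are the base of the recursion (distinguishing the first layer, where a vertex may spend all $\Delta$ edges going outward, from later layers, where one edge is effectively "used up" pointing back toward $v$) and the degenerate low-degree cases $\Delta \leq 1$, which can be verified directly.
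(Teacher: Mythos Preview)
Your proposal is correct and follows exactly the same approach as the paper: fix a vertex of a maximum $t$-equidistant set and bound the number of vertices at distance exactly $t$ from it by the standard Moore-type layer count $\Delta(\Delta-1)^{t-1}$. The paper's proof is a two-sentence sketch of precisely this argument; you have simply supplied the details of the layering recursion that the paper leaves implicit.
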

    \begin{proof}
        For some vertex $v\in V$, consider the maximum amount of vertices that can be at distance exactly $t$ from $v$, this is $\Delta(\Delta-1)^{t-1}$. Hence, the size of any $t$-equidistant set is at most $\Delta(\Delta-1)^{t-1}+1$.
    \end{proof}

    As we have seen in Section \ref{sec:equ_connect}, we can relate the equidistant number to the independence number and the clique number. There are two main ways of doing this. We can use Lemma \ref{lem:equ_alpha_ineq} and we can use Lemma \ref{lem:eqwalpha}.

    First, we will look at the bounds we can obtain on $\equ_{t+1}$ using bounds on $\alpha_t$; and we will do it applying Lemma \ref{lem:equ_alpha_ineq}. 
     
    \begin{proposition}[Inertial-type bound]\label{prop:1st_inertial_equ}
        Let $G =(V,E)$ be a graph with adjacency eigenvalues $\lambda_1 \geq \lambda_2 \geq \cdots \geq \lambda_n$ and adjacency matrix $A$. Let $p \in \mathbb{R}_t[x]$ with corresponding parameters $W(p) := \max_{u \in V} \{(p(A))_{uu}\}$ and $w(p) := \min_{u \in V} \{(p(A))_{uu}\}$. Then, the $(t+1)$-equidistant number of $G$ satisfies the bound 
        \begin{equation}\label{eq:1st_inertial_equ}
            \equ_{t+1}(G) \leq \min\{ \abs{\{i : p(\lambda_i) \geq w(p) \}},\abs{\{i : p(\lambda_i) \leq W(p) \}}\}.
        \end{equation}
    \end{proposition}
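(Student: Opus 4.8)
The plan is to reduce this statement to the inertial bound on the $t$-independence number and then invoke Lemma~\ref{lem:equ_alpha_ineq}. Since $t < t+1$, that lemma immediately yields $\equ_{t+1}(G) \leq \alpha_t(G)$, so it suffices to prove
\[
\alpha_t(G) \leq \min\{\abs{\{i : p(\lambda_i) \geq w(p)\}},\abs{\{i : p(\lambda_i) \leq W(p)\}}\}
\]
for every $p \in \mathbb{R}_t[x]$. The entire content is therefore the classical inertial bound, which I would establish from interlacing.

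The structural idea is that a polynomial of degree at most $t$ in the adjacency matrix cannot ``see'' vertices more than $t$ apart. First I would record that $(A^k)_{uv}$ counts walks of length $k$ from $u$ to $v$, so $(A^k)_{uv}=0$ whenever $k < d(u,v)$; hence for $p$ of degree at most $t$ and any two vertices $u\neq v$ with $d(u,v) > t$ we get $(p(A))_{uv}=0$. Now let $U$ be a $t$-independent set of size $r=\alpha_t(G)$, so all pairwise distances within $U$ exceed $t$. Then the principal submatrix $B := (p(A))[U]$ is a diagonal $r\times r$ matrix whose diagonal entries are the values $(p(A))_{uu}$, each lying in $[w(p),W(p)]$ by definition of $w(p)$ and $W(p)$.

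The remaining step is Cauchy interlacing. Since $A$ is symmetric, $p(A)$ is symmetric with eigenvalues $p(\lambda_1),\ldots,p(\lambda_n)$; let $\theta_1 \geq \cdots \geq \theta_n$ denote this multiset sorted in decreasing order, and let $\eta_1 \geq \cdots \geq \eta_r$ be the (diagonal) eigenvalues of $B$. Interlacing gives $\theta_i \geq \eta_i \geq \theta_{i+n-r}$ for $i=1,\ldots,r$. Because every $\eta_i \geq w(p)$, the inequalities $\theta_i \geq \eta_i$ force the top $r$ eigenvalues of $p(A)$ to satisfy $\theta_i \geq w(p)$, so at least $r$ of the $p(\lambda_i)$ are $\geq w(p)$. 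Symmetrically, because every $\eta_i \leq W(p)$, the inequalities $\eta_i \geq \theta_{i+n-r}$ force $\theta_{n-r+1},\ldots,\theta_n \leq W(p)$, so at least $r$ of the $p(\lambda_i)$ are $\leq W(p)$. Taking the minimum of the two counts bounds $\alpha_t(G)$, and composing with Lemma~\ref{lem:equ_alpha_ineq} finishes the argument.

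I expect the only delicate point to be the bookkeeping in the interlacing step: the eigenvalues of $p(A)$ are the numbers $p(\lambda_i)$, but since $p$ need not be monotone these are generally \emph{not} in the same order as the $\lambda_i$, so one must sort them into $\theta_1 \geq \cdots \geq \theta_n$ before applying the standard interlacing inequalities. The walk-counting identity $(p(A))_{uv}=0$ for $d(u,v)>t$ is routine but is the genuine reason the submatrix $B$ is diagonal; once that is in place, everything else is a clean application of Cauchy interlacing, so I anticipate no serious obstacle.
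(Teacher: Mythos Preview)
Your proposal is correct and follows exactly the paper's approach: apply Lemma~\ref{lem:equ_alpha_ineq} to obtain $\equ_{t+1}(G)\le\alpha_t(G)$ and then use the inertial-type bound on $\alpha_t$. The only difference is that the paper cites \cite[Theorem~3.1]{ACF2019} for the latter bound, whereas you reprove it via Cauchy interlacing, which is precisely the argument used there.
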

    \begin{proof}
        Use Lemma \ref{lem:equ_alpha_ineq}, and then bound $\alpha_t$ with \cite[Theorem 3.1]{ACF2019}.
    \end{proof}

 The implementation of the Inertial-type bound from Proposition \ref{prop:1st_inertial_equ} using Mixed Integer Linear Programming (MILP) is analogous to the MILP for the same bound for $\alpha_t$ \cite{ACFNS2022}. More details can be found in Section \ref{appendix:MILPinertia} of the Appendix.

    \begin{proposition}[Ratio-type bound]\label{prop:ratio_equ}
        Let $t \geq 1$ and let $G$ be a regular graph with $n$ vertices and adjacency eigenvalues $\lambda_1 \geq \lambda_2 \geq \cdots \geq \lambda_n$ and adjacency matrix $A$. Let $p \in \mathbb{R}_t[x]$ with corresponding parameters $W(p) := \max_{u \in V}$ $\{(p(A))_{uu}\}$ and $ \lambda(p):= \min_{i\in[2,n]} \{p(\lambda_i)\}$, and assume $p(\lambda_1) > \lambda(p)$. Then 
        \begin{equation}\label{eq:ratio_equ}
            \equ_{t+1}(G) \leq n\frac{W(p) - \lambda(p)}{p(\lambda_1) - \lambda(p)}.
        \end{equation}
    \end{proposition}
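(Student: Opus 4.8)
The plan is to prove this exactly as the Inertial-type bound of Proposition \ref{prop:1st_inertial_equ} was proved, namely by reducing the claim about $\equ_{t+1}$ to a known eigenvalue bound on the $t$-independence number. The only difference is that here I would invoke the \emph{ratio-type} (Hoffman-type) eigenvalue bound on $\alpha_t$ rather than the inertial one. First I would apply Lemma \ref{lem:equ_alpha_ineq} with $t^\ast = t+1$; since $t < t+1$, this immediately gives $\equ_{t+1}(G) \leq \alpha_t(G)$, converting the problem into one of bounding the $t$-independence number of the regular graph $G$.

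Next I would apply the ratio-type eigenvalue bound for $\alpha_t$ of a regular graph from \cite{ACF2019}. For a regular graph with adjacency eigenvalues $\lambda_1 \geq \cdots \geq \lambda_n$ and a polynomial $p \in \R_t[x]$ satisfying $p(\lambda_1) > \lambda(p)$, that bound reads
\begin{equation*}
    \alpha_t(G) \leq n\frac{W(p) - \lambda(p)}{p(\lambda_1) - \lambda(p)},
\end{equation*}
with $W(p) = \max_{u \in V}\{(p(A))_{uu}\}$ and $\lambda(p) = \min_{i \in [2,n]}\{p(\lambda_i)\}$, which are precisely the parameters defined in the statement. Chaining this with the inequality $\equ_{t+1}(G) \leq \alpha_t(G)$ from the previous step yields the desired bound, and the proof is complete in two lines.

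There is essentially no hard step here; the only point requiring care is to verify that the hypotheses of the proposition line up exactly with those of the cited ratio bound. In particular one must check that $G$ being regular is what allows the Hoffman-type argument to go through (so that the all-ones vector is the $\lambda_1$-eigenvector), that $p$ has degree at most $t$ so that $(p(A))_{uv} = 0$ whenever $d(u,v) > t$, and that the assumption $p(\lambda_1) > \lambda(p)$ guarantees a positive denominator. Since the definitions of $W(p)$ and $\lambda(p)$ in the statement coincide verbatim with those used in \cite{ACF2019}, no further adjustment is needed and the result follows directly.
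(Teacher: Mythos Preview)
Your proof is correct and follows exactly the same two-step approach as the paper: first invoke Lemma~\ref{lem:equ_alpha_ineq} to obtain $\equ_{t+1}(G)\le\alpha_t(G)$, then apply the ratio-type bound on $\alpha_t$ from \cite[Theorem 3.2]{ACF2019}. There is nothing to add.
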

    \begin{proof}
        Use Lemma \ref{lem:equ_alpha_ineq}, and then bound $\alpha_t$ with \cite[Theorem 3.2]{ACF2019}.
    \end{proof}

More details on the Linear Programming (LP) implementation of this Ratio-type bound can be found in Section \ref{appendix:LPratio} from the Appendix.
    %In order to implement Proposition \ref{prop:ratio_equ} for larger $t$, we can use the use a MILP that was used for the same bound on $\alpha_t$, see the appendix. 

For the original Ratio-type bound on $\alpha_t$ \cite[Theorem 3.2]{ACF2019}, the best choice of polynomial is known for $\alpha_2$ \cite[Corollary 3.3]{ACF2019} and $\alpha_3$ \cite{kavi_optimal_2023}. These polynomials are also the best choice for bounding $\equ_3$ and $\equ_4$ respectively. This follows because in Proposition \ref{prop:ratio_equ} we directly apply the bound from $\alpha_{t}$. If a different polynomial were to give a better bound on $\equ_{t+1}$, then it must automatically also give a better bound on $\alpha_{t-1}$, a contradiction. Thus we obtain the following two straightforward corollaries.

    \begin{corollary}[Best Ratio-type bound, $t=3$]\label{cor:ratio_equt3}
     Let $G$ be a $k$-regular graph with $n$ vertices with distinct adjacency eigenvalues $k = \theta_0 > \theta_1 > \cdots > \theta_d$ with $d \geq 2$. Let $\theta_i$ be the largest eigenvalue such that $\theta_i \leq -1$. Then, using $p(x) = x^2 -(\theta_i+\theta_{i-1})x$ in Proposition \ref{prop:ratio_equ} gives the following bound:
        \begin{equation*}
            \equ_3(G) \leq n \frac{\theta_0 + \theta_i\theta_{i-1}}{(\theta_0 - \theta_i)(\theta_0 - \theta_{i-1})}. 
        \end{equation*}
        This is the best possible bound for $t=3$ that can be obtained from Proposition \ref{prop:ratio_equ}.
    \end{corollary}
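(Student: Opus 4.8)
The plan is to prove the corollary in two movements: a direct substitution that produces the explicit closed form, and an optimality (``best possible'') argument that piggybacks on the known optimal polynomial for $\alpha_2$. Since Proposition~\ref{prop:ratio_equ} for $\equ_3$ is exactly the case $t=2$, the admissible polynomials are the degree-$2$ polynomials $\mathbb{R}_2[x]$, matching the degree of the stated $p$.

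First I would substitute $p(x) = x^2 - (\theta_i+\theta_{i-1})x$ into the three parameters of Proposition~\ref{prop:ratio_equ}. Because $G$ is $k$-regular and loopless, $(A^2)_{uu} = k$ and $(A)_{uu} = 0$ for every vertex $u$, so $(p(A))_{uu} = k$ is constant across vertices; hence $W(p) = \theta_0$. Evaluating at the Perron eigenvalue gives $p(\lambda_1) = p(\theta_0) = \theta_0\big(\theta_0 - \theta_i - \theta_{i-1}\big)$. The one genuinely delicate computation is $\lambda(p) = \min_{j \in [2,n]} p(\lambda_j)$: writing $p(x) = x\big(x - (\theta_i+\theta_{i-1})\big)$, this is an upward parabola whose vertex lies at the midpoint $\tfrac{1}{2}(\theta_i+\theta_{i-1})$ of the consecutive distinct eigenvalues $\theta_{i-1} > \theta_i$. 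Since the $\theta_j$ are distinct and ordered, $\theta_i$ and $\theta_{i-1}$ are the two eigenvalues closest to this vertex and are equidistant from it, so $p(\theta_i) = p(\theta_{i-1}) = -\theta_i\theta_{i-1}$ is the minimum over all non-principal eigenvalues, every other $\theta_j$ being strictly farther from the vertex. Thus $\lambda(p) = -\theta_i\theta_{i-1}$. Substituting and factoring the denominator via $\theta_0^2 - (\theta_i+\theta_{i-1})\theta_0 + \theta_i\theta_{i-1} = (\theta_0-\theta_i)(\theta_0-\theta_{i-1})$ yields the claimed $n\,\dfrac{\theta_0 + \theta_i\theta_{i-1}}{(\theta_0-\theta_i)(\theta_0-\theta_{i-1})}$. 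Along the way I would record that $p(\lambda_1) - \lambda(p) = (\theta_0-\theta_i)(\theta_0-\theta_{i-1}) > 0$, which both verifies the hypothesis $p(\lambda_1) > \lambda(p)$ and flags that one needs $i \geq 2$ for $\theta_{i-1}$ to be a genuine non-principal eigenvalue; for a connected $k$-regular graph with $d \geq 2$ this holds since $\theta_1 > -1$.

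For the optimality claim, I would argue that the right-hand side of Proposition~\ref{prop:ratio_equ}, read as a function of $p \in \mathbb{R}_2[x]$, is \emph{identical} to the Ratio-type bound on $\alpha_2$ from \cite[Theorem~3.2]{ACF2019}: the quantities $W(p)$, $\lambda(p)$, and $p(\lambda_1)$ depend only on $G$ and $p$, not on whether the target is $\alpha_2$ or $\equ_3$, and Proposition~\ref{prop:ratio_equ} was derived precisely by feeding that $\alpha_2$ bound through Lemma~\ref{lem:equ_alpha_ineq}. Hence minimizing over $p \in \mathbb{R}_2[x]$ is literally the same optimization in both settings, and a minimizer for $\alpha_2$ is a minimizer for $\equ_3$. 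Since \cite[Corollary~3.3]{ACF2019} identifies $p(x) = x^2 - (\theta_i+\theta_{i-1})x$ as the optimal degree-$2$ polynomial for $\alpha_2$, the same polynomial is optimal here; equivalently, any $p'$ beating it for $\equ_3$ would beat it for $\alpha_2$, contradicting \cite[Corollary~3.3]{ACF2019}.

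I expect the main obstacle to be the optimality half rather than the substitution, the latter being routine once the parabola-vertex observation pins down $\lambda(p)$. The care lies in (a) justifying that the minimum over non-principal eigenvalues is attained exactly at the consecutive pair $\theta_{i-1},\theta_i$, where distinctness and ordering of the eigenvalues are essential and the $i \geq 2$ requirement must be confirmed, and (b) making the transfer-of-optimality argument airtight by checking that the admissible class $\mathbb{R}_2[x]$ and all three parameters are computed identically for $\alpha_2$ and $\equ_3$, so that the two minimization problems coincide term by term.
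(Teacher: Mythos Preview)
Your proposal is correct and follows essentially the same approach as the paper. The paper's justification, given in the paragraph immediately preceding the corollary, is precisely your optimality argument: since Proposition~\ref{prop:ratio_equ} is obtained by feeding the $\alpha_t$ Ratio-type bound through Lemma~\ref{lem:equ_alpha_ineq}, a polynomial improving the $\equ_3$ bound would equally improve the $\alpha_2$ bound, contradicting \cite[Corollary~3.3]{ACF2019}; the explicit closed form is simply inherited from that reference, whereas you spell out the substitution (including the parabola-vertex argument for $\lambda(p)$ and the check that $i\ge 2$) in more detail than the paper does.
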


    \begin{corollary}[Best Ratio-type bound, $t=4$]\label{cor:ratio_equt4}
        Let $G$ be a $k$-regular graph with $n$ vertices with distinct adjacency eigenvalues $k = \theta_0 > \theta_1 > \cdots > \theta_d$, with $d \geq 3$. Let $\theta_s$ be the largest eigenvalue such that $\theta_s \leq - \frac{\theta^2_0 + \theta_0\theta_d-\Delta_3}{\theta_0 (\theta_d + 1)}$, where $\Delta_3 = \max_{u\in V} \{(A^3)_{uu}\}$. Let $b = -(\theta_s +\theta_{s-1} +\theta_d)$ and $c = \theta_d \theta_s + \theta_d \theta_{s-1}+\theta_s\theta_{s-1}$. Then, using $p(x) = x^3 +bx^2 +cx$ in Proposition \ref{prop:ratio_equ} gives the following bound:
        \begin{equation*}
            \equ_4(G) \leq  n \frac{\Delta_3 - \theta_0(\theta_s + \theta_{s-1} + \theta_d) - \theta_s\theta_{s-1} \theta_d}{ (\theta_0 - \theta_s)(\theta_0 - \theta_{s-1})(\theta_0 - \theta_d)}.
        \end{equation*}
        This is the best possible bound for $t=4$ that can be obtained from Proposition \ref{prop:ratio_equ}.
    \end{corollary}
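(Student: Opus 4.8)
The plan is to reduce the claim to the already-established optimal Ratio-type bound on $\alpha_3$. Recall that Proposition~\ref{prop:ratio_equ} bounds $\equ_4$ by invoking Lemma~\ref{lem:equ_alpha_ineq} to get $\equ_4(G)\le \alpha_3(G)$ and then applying the Ratio-type bound of \cite[Theorem 3.2]{ACF2019} to $\alpha_3$ with a polynomial $p\in\mathbb{R}_3[x]$. Hence, for a fixed $p$, the value produced by Proposition~\ref{prop:ratio_equ} is \emph{identical} to the value of the $\alpha_3$ Ratio-type bound for the same $p$, so it suffices to evaluate the latter at $p(x)=x^3+bx^2+cx$. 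First I would rewrite $p$ in factored form: by Vieta's formulas the given coefficients $b=-(\theta_s+\theta_{s-1}+\theta_d)$ and $c=\theta_d\theta_s+\theta_d\theta_{s-1}+\theta_s\theta_{s-1}$ are exactly the elementary symmetric functions of $\theta_s,\theta_{s-1},\theta_d$, so
\[
p(x)=(x-\theta_s)(x-\theta_{s-1})(x-\theta_d)+\theta_s\theta_{s-1}\theta_d,
\]
the added constant cancelling the constant term of the product. In particular $p(\theta_s)=p(\theta_{s-1})=p(\theta_d)=\theta_s\theta_{s-1}\theta_d$.

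Next I would compute the three quantities entering \eqref{eq:ratio_equ}. Since $G$ is $k$-regular we have $(A^2)_{uu}=k$ and $(A)_{uu}=0$ for every vertex $u$, whence $(p(A))_{uu}=(A^3)_{uu}+bk$; taking the maximum over $u$ and substituting $k=\theta_0$ gives $W(p)=\Delta_3-\theta_0(\theta_s+\theta_{s-1}+\theta_d)$. From the factored form, $p(\theta_0)-\theta_s\theta_{s-1}\theta_d=(\theta_0-\theta_s)(\theta_0-\theta_{s-1})(\theta_0-\theta_d)$, which is positive since $\theta_0$ is the largest eigenvalue, so the hypothesis $p(\lambda_1)>\lambda(p)$ of Proposition~\ref{prop:ratio_equ} holds. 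The one delicate point is identifying $\lambda(p)=\min_{i\ge 2}p(\theta_i)$: I would argue, exactly as in the derivation of the optimal $\alpha_3$ polynomial in \cite{kavi_optimal_2023}, that the defining inequality $\theta_s\le-\frac{\theta_0^2+\theta_0\theta_d-\Delta_3}{\theta_0(\theta_d+1)}$ is precisely the condition guaranteeing that the minimum of $p$ over the non-principal eigenvalues is attained at $\theta_s,\theta_{s-1},\theta_d$, so that $\lambda(p)=\theta_s\theta_{s-1}\theta_d$. Substituting $W(p)$, $\lambda(p)$ and $p(\theta_0)-\lambda(p)$ into \eqref{eq:ratio_equ} then yields the displayed bound, the constant $\theta_s\theta_{s-1}\theta_d$ cancelling in the numerator.

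For the optimality claim I would use the equivalence from the first paragraph together with \cite{kavi_optimal_2023}, which shows that $p(x)=x^3+bx^2+cx$ minimizes the $\alpha_3$ Ratio-type bound over all degree-$3$ polynomials. Were some other $q\in\mathbb{R}_3[x]$ to give a strictly smaller value in Proposition~\ref{prop:ratio_equ}, then by the equivalence $q$ would give a strictly smaller $\alpha_3$ bound, contradicting the optimality in \cite{kavi_optimal_2023}. The main obstacle is the verification that the stated threshold for $\theta_s$ forces $\lambda(p)=\theta_s\theta_{s-1}\theta_d$; this is the genuinely computational ingredient, and I would handle it by importing the corresponding step of the $\alpha_3$ analysis rather than redoing the optimization from scratch.
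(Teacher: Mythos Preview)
Your proposal is correct and follows the same approach as the paper: both observe that the value produced by Proposition~\ref{prop:ratio_equ} for a given polynomial coincides with the $\alpha_3$ Ratio-type bound for that polynomial, then invoke \cite{kavi_optimal_2023} for the optimal choice and derive optimality for $\equ_4$ by the same contradiction argument. The paper in fact gives no proof beyond the paragraph preceding the two corollaries, so your explicit verification via Vieta and the factored form of $p$ is additional detail rather than a different method.
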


    In the Appendix we will illustrate the tightness of the Ratio-type bound from Proposition \ref{prop:ratio_equ}.

   Next we show that the bounds from Corollaries \ref{cor:ratio_equt3} and \ref{cor:ratio_equt4} are tight for certain Johnson graphs. A \emph{Johnson graph}, denoted $J(n,k)$, is a graph where the vertices represent the $k$-element subsets of a set of size $n$. Two vertices are adjacent if and only if their corresponding subsets differ by exactly 1 element. The graph $J(n,k)$ has $\binom{n}{k}$ vertices and eigenvalues $\{(k-j)(n-k-j)-j : j \in [0,\min\{k,n-k\}]\}$.

    \begin{proposition}\label{prop:johnson_tight}
        The bound from Corollary \ref{cor:ratio_equt3} is tight for the Johnson graphs $J(n,3)$, $n >6$, and the bound from Corollary \ref{cor:ratio_equt4} is tight for the Johnson graphs $J(n,4)$, $n>8$.
    \end{proposition}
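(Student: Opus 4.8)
The plan is to evaluate both sides of each bound explicitly and check that they agree. First I would pin down the exact value of the parameter. In $J(n,k)$ the distance between two $k$-subsets $A,B$ equals $k-|A\cap B|$, so two vertices are at distance exactly $k$ precisely when $A\cap B=\emptyset$; since $n>2k$ forces $\diam(J(n,k))=k$, a set is $k$-equidistant if and only if its members are pairwise disjoint $k$-subsets. Hence $\equ_3(J(n,3))=\lfloor n/3\rfloor$ and $\equ_4(J(n,4))=\lfloor n/4\rfloor$. Throughout I would write $N=\binom{n}{k}$ for the number of vertices, to keep it distinct from the Johnson parameter $n$ that plays the role of ``$n$'' in the corollaries.

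For $t=3$, the eigenvalue formula gives $\theta_0=3n-9$, $\theta_1=2n-9$, $\theta_2=n-7$, $\theta_3=-3$ (four distinct values, using $n>6$). For $n\ge 7$ we have $\theta_2=n-7\ge 0>-1$, so the largest eigenvalue $\le -1$ is $\theta_3=-3$, giving $i=3$ and $\theta_{i-1}=n-7$. Substituting into Corollary \ref{cor:ratio_equt3} and simplifying (one finds $\theta_0+\theta_i\theta_{i-1}=12$ and $(\theta_0-\theta_i)(\theta_0-\theta_{i-1})=6(n-1)(n-2)$, while $N=\binom{n}{3}=\tfrac{n(n-1)(n-2)}{6}$) collapses the bound to exactly $n/3$, matching $\equ_3$.

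For $t=4$, the eigenvalues are $\theta_0=4n-16,\ \theta_1=3n-16,\ \theta_2=2n-14,\ \theta_3=n-10,\ \theta_4=-4$. Here the selection rule in Corollary \ref{cor:ratio_equt4} requires $\Delta_3=(A^3)_{uu}$, which I would compute by counting closed $3$-walks (equivalently, triangles) through a fixed vertex $A$: two neighbors of $A$ are adjacent exactly when they arise either by fixing the deleted element and varying the added one, or by fixing the added element and varying the deleted one, which yields $\Delta_3=2\bigl(4\binom{n-4}{2}+6(n-4)\bigr)=4(n-4)(n-2)$. Feeding this into the threshold $-\tfrac{\theta_0^2+\theta_0\theta_d-\Delta_3}{\theta_0(\theta_d+1)}$ gives $n-6$; since $\theta_2=2n-14>n-6$ for $n>8$ while $\theta_3=n-10\le n-6$, the selected eigenvalue is $\theta_s=\theta_3$, so $s=3$, $\theta_{s-1}=2n-14$, $\theta_d=-4$. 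The final substitution (numerator $144$, denominator $24(n-1)(n-2)(n-3)$, and $N=\binom{n}{4}$) again collapses the bound to exactly $n/4$, matching $\equ_4$.

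Thus in both cases the Ratio-type bound evaluates precisely to $n/t$, which equals $\equ_t=\lfloor n/t\rfloor$ whenever $t\mid n$ (and agrees up to the floor otherwise), establishing tightness. The main obstacle is careful bookkeeping: correctly identifying $\theta_i$ and $\theta_s$ from the threshold conditions (this is exactly where the hypotheses $n>6$ and $n>8$ are needed to guarantee the right selection), computing $\Delta_3$ from the two triangle types of the Johnson graph, and carrying out the algebraic cancellations that reduce the ratios to the clean value $n/t$.
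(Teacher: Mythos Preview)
Your proposal is correct and follows essentially the same approach as the paper: determine $\equ_k(J(n,k))=\lfloor n/k\rfloor$ via pairwise-disjoint $k$-subsets, list the Johnson eigenvalues, identify the selected index from the corollary's threshold, and substitute to obtain $n/3$ and $n/4$. Your write-up is in fact more careful than the paper's in two places: you give the triangle count yielding $\Delta_3=4(n-4)(n-2)$ explicitly, and you have the correct value $\theta_2=2n-14$ for $J(n,4)$ (the paper prints $2n-8$, a typo; with your value the numerator $144$ and denominator $24(n-1)(n-2)(n-3)$ cancel against $\binom{n}{4}$ to give $n/4$ as claimed).
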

%    \luuk{there's a few more cases (i.e. when $n<t$) for which we also have equality, but those are trivial cases (i.e. $\equ_t = 1$), so they aren't worth considering.}
    \begin{proof}
        First, we will determine the exact value of $\equ_k(J(n,k))$ (note that we use $\equ_k$ here instead of $\equ_t$ to highlight the fact that it matches the $k$ from $J(n,k)$). Using the $k$-element subset representation of the vertices, we find a set $U \subset V(J(n,k))$ is a $k$-equidistant set if and only if the subsets are disjoint. From this we find $\equ_k(J(n,k)) = \left\lfloor\frac{n}{k}\right\rfloor$.

        Now we will show our bounds meet this exact value. For $J(n,3)$ we have the eigenvalues $(\theta_0,\theta_1,\theta_2,\theta_3) = (3n-9,2n-9,n-7,-3)$. Since $n >6$, we find $\theta_3$ is the largest eigenvalues less than $-1$. By plugging these values into the bound from Corollary \ref{cor:ratio_equt3} we obtain
        \begin{equation*}
             \equ_3(J(n,3)) \leq \frac{n}{3},
        \end{equation*}
 
        which of course can be rounded down to $\left\lfloor\frac{n}{3}\right\rfloor$, since $\equ_3$ is an integer.

        Now we move on to the case $\equ_4(J(n,4))$. Here we have eigenvalues $(\theta_0,\theta_1,\theta_2,\theta_3,\theta_4) = (4n-16,3n-16,2n-8,n-10,-4)$. Further, we have $\Delta_3 = k(n-k)(n-2)$, which can be obtained by a simple counting argument. For Corollary \ref{cor:ratio_equt4} we find $\theta_s$ is the largest eigenvalues less than $n-6$. Since $n >8$, we find $\theta_s = \theta_3 = n-10$. By plugging these values into the bound from Corollary \ref{cor:ratio_equt4} we obtain
        \begin{equation*}
             \equ_4(J(n,4)) \leq \frac{n}{4},
        \end{equation*} 
 
        which can be rounded down to $\left\lfloor\frac{n}{4}\right\rfloor$, since $\equ_4$ is an integer.
    \end{proof}

    We can also make use of the relation $\equ_t(G) = \omega(G^{[\sharp t]})$ to extend a bound on the clique number by Haemers \cite[Theorem 3.5]{haemers_interlacing_1995}.

    \begin{proposition}\cite[Theorem 3.5]{haemers_interlacing_1995}\label{prop:omega_haemers}
        Let $G$ be a regular graph with adjacency eigenvalues $\lambda_1 \geq \cdots \geq \lambda_n$. Then the clique number of $G$ satisfies:
        \begin{equation*}
            \omega(G) \leq n\frac{1+\lambda_2}{n-\lambda_1+\lambda_2}.
        \end{equation*}
    \end{proposition}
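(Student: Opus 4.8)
The plan is to use eigenvalue interlacing via a quotient matrix, which is the standard technique behind Haemers-type clique bounds. Let $G$ be $k$-regular, so that $\lambda_1 = k$, and let $C$ be a maximum clique with $|C| = \omega := \omega(G)$. I would partition the vertex set into the two parts $V_1 = C$ and $V_2 = V \setminus C$, and consider the $2 \times 2$ quotient matrix $B$ whose $(i,j)$ entry is the average row sum of the block of $A$ indexed by $(V_i, V_j)$. Since $A$ is symmetric, $B$ is similar to a symmetric matrix, so its eigenvalues are real and interlace those of $A$.

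First I would compute the entries of $B$. Because $C$ is a clique, every vertex of $C$ has exactly $\omega - 1$ neighbours inside $C$, so $b_{11} = \omega - 1$; by $k$-regularity each row of $A$ sums to $k$, hence the rows of $B$ also sum to $k$, giving $b_{12} = k - \omega + 1$ and $b_{22} = k - b_{21}$. Counting the edges between $C$ and $V_2$ in two ways yields $b_{21} = \omega(k - \omega + 1)/(n - \omega)$. Since $B$ has constant row sums $k$, the all-ones vector is an eigenvector with eigenvalue $k = \lambda_1$, so the larger eigenvalue of $B$ is $\mu_1 = \lambda_1$ and, reading off the trace, the smaller one is $\mu_2 = \operatorname{tr}(B) - \lambda_1 = (\omega - 1) - \omega(k - \omega + 1)/(n - \omega)$.

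Next I would invoke Haemers' eigenvalue interlacing for quotient matrices, which gives $\mu_2 \le \lambda_2$. Substituting the expression for $\mu_2$ and clearing the positive denominator $n - \omega$ produces a polynomial inequality in $\omega$; the crucial feature is that the $\omega^2$ contributions on both sides cancel, as do the isolated linear $\omega$ terms, leaving a genuinely linear inequality of the form $\omega(n - \lambda_1 + \lambda_2) \le n(1 + \lambda_2)$. Dividing through by $n - \lambda_1 + \lambda_2$ then yields exactly the stated bound.

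The routine but delicate parts are the two-way edge count that produces $b_{21}$ and the final rearrangement, where one must verify that the quadratic terms cancel and that the denominator $n - \lambda_1 + \lambda_2$ is positive so that dividing preserves the direction of the inequality; here $\lambda_1 = k \le n-1$ forces $n - \lambda_1 \ge 1$, and the bound is taken under the standing assumption that $n - \lambda_1 + \lambda_2 > 0$. The interlacing step itself is the conceptual heart of the argument, but it is entirely off-the-shelf and requires no new work, so the only real obstacle is bookkeeping in the algebra.
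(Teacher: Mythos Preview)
The paper does not give its own proof of this proposition; it is stated with a citation to Haemers' interlacing paper and used as a black box. Your proposal is correct and is precisely the standard quotient-matrix interlacing argument from Haemers' original proof, so there is nothing to compare beyond noting that you have faithfully reconstructed the cited result (with the appropriate caveat that the degenerate case $\omega = n$, i.e.\ $G=K_n$, must be set aside so that the partition is well defined and $n-\lambda_1+\lambda_2>0$).
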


    \begin{proposition}\label{prop:omega_equt_UB}
        Let $G$ be a graph such that its  exact distance $t$-power, $G^{[\sharp t]}$ is regular. Let $G^{[\sharp t]}$ have adjacency eigenvalues $\beta_1 \geq \cdots \geq \beta_n$. Then the $t$-equidistant number of $G$ satisfies
        \begin{equation}\label{eq:omega_equt_UB}
            \equ_t(G) \leq n\frac{1+\beta_2}{n-\beta_1+\beta_2}.
        \end{equation}
    \end{proposition}
    \begin{proof}
        Apply Proposition \ref{prop:omega_haemers} to $G^{[\sharp t]}$, then use the fact that $\equ_t(G) = \omega(G^{[\sharp t]})$.
    \end{proof}
 
 Another parameter that Haemers \cite{H1997} looked at is $\Phi(G)$. Let $A,B \subset V(G)$, then $A$ and $B$ are called \emph{disconnected} if there is no edge between $A$ and $B$. The quantity $\Phi(G)$ is defined as the maximum of $\sqrt{\abs{A}\abs{B}}$ where $A,B$ are disjoint and disconnected. We can relate $\Phi(G)$ to the $t$-equidistant number and apply \cite[Theorem 2.4]{H1997} to obtain a bound on $\equ_t(G)$. First, we state the original bound on $\Phi(G)$:

    \begin{proposition}\cite[Theorem 2.4]{H1997}\label{prop:Phi_haemers}
        Let $G$ be a graph with Laplacian eigenvalues $\mu_1 \leq \cdots \leq \mu_n$, then
        \begin{equation*}
            \Phi(G) \leq  \frac{n}{2}\left(1-\frac{\mu_2}{\mu_n}\right).
        \end{equation*}
    \end{proposition}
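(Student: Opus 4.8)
The plan is to prove this by eigenvalue interlacing on a two-dimensional test space built from the two disconnected sets. Let $A,B$ be disjoint sets with no edges between them that attain $\Phi(G)$, write $\alpha=\abs{A}$ and $\beta=\abs{B}$, and let $L=D-\textrm{Adj}(G)$ be the Laplacian, whose smallest eigenvalue is $\mu_1=0$ with eigenvector $\mathbf{1}$. First I would deflate the characteristic vectors, setting $\hat{\chi}_A=\chi_A-\tfrac{\alpha}{n}\mathbf{1}$ and $\hat{\chi}_B=\chi_B-\tfrac{\beta}{n}\mathbf{1}$, so that both lie in $\mathbf{1}^{\perp}$. The two structural facts I would record are: since $A$ and $B$ are disconnected, $\chi_A^{\top}L\chi_B=0$; and since $L\mathbf{1}=0$, deflation changes neither the $L$-quadratic form nor the cross term. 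Concretely $\hat{\chi}_A^{\top}L\hat{\chi}_A=\partial_A$ (the edge boundary of $A$), $\hat{\chi}_B^{\top}L\hat{\chi}_B=\partial_B$, and $\hat{\chi}_A^{\top}L\hat{\chi}_B=0$, while the Gram data are $\norm{\hat{\chi}_A}^2=\tfrac{\alpha(n-\alpha)}{n}$, $\norm{\hat{\chi}_B}^2=\tfrac{\beta(n-\beta)}{n}$ and $\hat{\chi}_A^{\top}\hat{\chi}_B=-\tfrac{\alpha\beta}{n}$.

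Since every vector in $\mathbf{1}^{\perp}$ has Rayleigh quotient in $[\mu_2,\mu_n]$, I would apply this to all combinations $s\hat{\chi}_A+u\hat{\chi}_B$, which yields two positive-semidefiniteness conditions on the resulting $2\times 2$ quadratic forms. Writing $P=\tfrac{\alpha(n-\alpha)}{n}$, $Q=\tfrac{\beta(n-\beta)}{n}$ and $R=\tfrac{\alpha\beta}{n}$, the determinant test turns these into $(\partial_A-\mu_2 P)(\partial_B-\mu_2 Q)\ge \mu_2^2R^2$ and $(\mu_n P-\partial_A)(\mu_n Q-\partial_B)\ge\mu_n^2R^2$, together with nonnegativity of each of the four factors.

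The crucial step is to eliminate the uncontrolled boundary quantities $\partial_A,\partial_B$. Setting $s=\partial_A-\mu_2P$, $w=\partial_B-\mu_2Q$, $s'=\mu_nP-\partial_A$ and $w'=\mu_nQ-\partial_B$, I note that $s+s'=(\mu_n-\mu_2)P$ and $w+w'=(\mu_n-\mu_2)Q$. Taking square roots of the two determinant inequalities and adding gives $(\mu_2+\mu_n)R\le\sqrt{sw}+\sqrt{s'w'}$, and then the elementary inequality $\sqrt{sw}+\sqrt{s'w'}\le\sqrt{(s+s')(w+w')}$ (Cauchy--Schwarz applied to $(\sqrt{s},\sqrt{s'})$ and $(\sqrt{w},\sqrt{w'})$) collapses everything to $(\mu_2+\mu_n)R\le(\mu_n-\mu_2)\sqrt{PQ}$, that is, $\sqrt{\alpha\beta}\le\tfrac{\mu_n-\mu_2}{\mu_n+\mu_2}\sqrt{(n-\alpha)(n-\beta)}$.

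Finally I would optimize the right-hand side. Squaring gives $\alpha\beta\le\rho^2(n-\alpha)(n-\beta)$ with $\rho=\tfrac{\mu_n-\mu_2}{\mu_n+\mu_2}$, and a short Lagrange/symmetry argument shows the maximum of $\sqrt{\alpha\beta}$ under this constraint occurs at $\alpha=\beta$, where $\alpha=\rho(n-\alpha)$ yields $\sqrt{\alpha\beta}=\tfrac{\rho}{1+\rho}\,n=\tfrac{n}{2}\left(1-\tfrac{\mu_2}{\mu_n}\right)$, as claimed. The main obstacle is the middle step: setting up the two semidefinite constraints correctly and combining them so that $\partial_A,\partial_B$ disappear, with the Cauchy--Schwarz collapse being what makes the elimination clean. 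I would also dispatch the degenerate case $A\cup B=V$ (which forces $G$ disconnected and $\mu_2=0$, whence $\hat{\chi}_A,\hat{\chi}_B$ are dependent) separately, where the bound reduces to $\sqrt{\alpha\beta}\le n/2$ by the AM--GM inequality.
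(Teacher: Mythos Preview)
The paper does not prove this proposition; it simply quotes it from Haemers \cite{H1997} and uses it as a black box. So there is no in-paper proof to compare against.

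Your argument is correct. It is a Rayleigh-quotient reformulation of the standard quotient-matrix interlacing proof (partition $V$ into $A$, $B$, $V\setminus(A\cup B)$ and interlace the Laplacian quotient matrix against the full spectrum): projecting $\chi_A,\chi_B$ onto $\mathbf{1}^{\perp}$, using $\chi_A^{\top}L\chi_B=0$, and sandwiching the resulting $2\times 2$ form between $\mu_2$ and $\mu_n$ encodes exactly the same information. The Cauchy--Schwarz step that eliminates $\partial_A,\partial_B$ is the right device and is clean.

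One minor simplification: the final ``Lagrange/symmetry'' step is more directly handled by AM--GM. From $\alpha+\beta\ge 2\sqrt{\alpha\beta}$ one gets
\[
(n-\alpha)(n-\beta)=n^2-n(\alpha+\beta)+\alpha\beta\le \bigl(n-\sqrt{\alpha\beta}\bigr)^2,
\]
so your inequality $\sqrt{\alpha\beta}\le\rho\sqrt{(n-\alpha)(n-\beta)}$ immediately gives $\sqrt{\alpha\beta}\le\rho\bigl(n-\sqrt{\alpha\beta}\bigr)$ and hence $\sqrt{\alpha\beta}\le\frac{\rho}{1+\rho}\,n$, with no optimization needed. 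This also covers the $\mu_2=0$ case ($\rho=1$) uniformly.
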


    \begin{proposition}\label{prop:equ_Phi_bound}
    Let $G$ be a graph, and let $\mu_1 \leq \cdots \leq \mu_n$ be the Laplacian eigenvalues of $\overline{G^{[\sharp t]}}$. Then 
    \begin{equation}\label{eq:equ_Phi_bound}
        \equ_t(G) \leq  n\left(1-\frac{\mu_2}{\mu_n}\right)+1.
    \end{equation}
    \end{proposition}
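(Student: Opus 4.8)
The plan is to reduce the statement to the bound on $\Phi$ from Proposition \ref{prop:Phi_haemers} applied to the complement $\overline{G^{[\sharp t]}}$, in the same spirit as the preceding propositions reduced $\equ_t$-bounds to bounds on $\omega$ or $\alpha_t$. The crucial observation is that, by the identity $\equ_t(G)=\omega(G^{[\sharp t]})$, a largest $t$-equidistant set of $G$ is a maximum clique of $G^{[\sharp t]}$, and hence an \emph{independent set} of $\overline{G^{[\sharp t]}}$. An independent set has no internal edges, so splitting it into two disjoint parts automatically produces two sets with no edge between them, which is exactly the disconnected configuration measured by $\Phi$.

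Concretely, first I would let $C$ be a maximum $t$-equidistant set of $G$, set $c:=\abs{C}=\equ_t(G)$, and view $C$ as an independent set of $\overline{G^{[\sharp t]}}$. The case $c=1$ is trivial, since the right-hand side of \eqref{eq:equ_Phi_bound} is at least $1$ (because $0\le\mu_2\le\mu_n$ forces $1-\mu_2/\mu_n\in[0,1]$), so I would assume $c\ge 2$. Then I would partition $C$ into two disjoint nonempty parts $A$ and $B$ that are as balanced as possible, so that $\abs{A}=\lfloor c/2\rfloor$ and $\abs{B}=\lceil c/2\rceil$. Since $C$ is independent in $\overline{G^{[\sharp t]}}$ there is no edge between $A$ and $B$, so $A,B$ are disjoint and disconnected, whence by definition of $\Phi$,
\begin{equation*}
\Phi(\overline{G^{[\sharp t]}})\ \ge\ \sqrt{\abs{A}\abs{B}}\ =\ \sqrt{\big\lfloor \tfrac{c}{2}\big\rfloor\big\lceil \tfrac{c}{2}\big\rceil}\ \ge\ \big\lfloor \tfrac{c}{2}\big\rfloor\ \ge\ \frac{c-1}{2}.
\end{equation*}

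Next I would apply Proposition \ref{prop:Phi_haemers} to $\overline{G^{[\sharp t]}}$, whose Laplacian eigenvalues are $\mu_1\le\cdots\le\mu_n$, obtaining $\Phi(\overline{G^{[\sharp t]}})\le \tfrac{n}{2}\left(1-\mu_2/\mu_n\right)$. Chaining this with the previous display gives $\tfrac{c-1}{2}\le \tfrac{n}{2}\left(1-\mu_2/\mu_n\right)$, and multiplying by $2$ and adding $1$ yields $c\le n\left(1-\mu_2/\mu_n\right)+1$, which is precisely \eqref{eq:equ_Phi_bound}.

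The argument is short and I do not expect a serious obstacle; the two points needing care are (i) the elementary chain $\sqrt{\lfloor c/2\rfloor\lceil c/2\rceil}\ge\lfloor c/2\rfloor\ge(c-1)/2$, which is exactly what produces the additive $+1$ and relies on the balanced split, and (ii) the degenerate situation in which $\overline{G^{[\sharp t]}}$ has no edges (equivalently $G^{[\sharp t]}=K_n$, where $\mu_n=0$), which must be excluded or treated separately so that the ratio $\mu_2/\mu_n$ is well defined.
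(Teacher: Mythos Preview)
Your proof is correct and follows essentially the same approach as the paper: view a maximum $t$-equidistant set as an independent set in $\overline{G^{[\sharp t]}}$, split it into two balanced disconnected parts to lower-bound $\Phi(\overline{G^{[\sharp t]}})$ by $(\equ_t(G)-1)/2$, and then apply Proposition~\ref{prop:Phi_haemers}. Your treatment is in fact slightly more careful than the paper's, since you explicitly handle the trivial case $c=1$ and flag the degenerate situation $\mu_n=0$.
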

    \begin{proof}
        Let $U$ be a $t$-equidistant set of size $\equ_t(G)$. Then, in $\overline{G^{[\sharp t]}}$, any two vertices in $U$ will not be adjacent. Hence, if we split $U$ into $U_1,U_2$, such that $\abs{U_i} \geq \frac{\equ_t(G)-1}{2}$, then $U_1,U_2$ are disconnected, and hence 
        $$\frac{\equ_t(G)-1}{2} \leq \sqrt{\abs{U_1}\abs{U_2}} \leq \Phi(\overline{G^{[\sharp t]}}).$$ 
        Finally, apply Proposition \ref{prop:Phi_haemers} to $\overline{G^{[\sharp t]}}$ to obtain the desired bound.
    \end{proof}

    It is of interest to investigate how well these bounds induced from the different parameters perform in practice, especially in view of the results from Section \ref{sec:equ_connect} which tell us the gap between $\alpha_{t-1}$ and $\equ_t$ can grow very large. For this reason, it is worth considering an alternative and more direct approach to derive eigenvalue bounds. Instead of using interlacing on the adjacency matrix (as we did to derive Propositions \ref{prop:1st_inertial_equ} and \ref{prop:ratio_equ}), in the following results (and the subsequent corollaries in the next section) we apply the interlacing method to the distance matrix of a graph. This makes sense, as the adjacency eigenvalue interlacing may not fully capture the properties of an equidistant set, whereas the distance matrix might. While eigenvalue interlacing using the adjacency matrix is a widely used tool, the distance matrix is much less studied when it comes to using eigenvalue interlacing for bounding graph parameters.

    \begin{theorem}\label{thm:equt_distance_bound}
        Let $G$ be a graph, and let $D$ be its distance matrix, with eigenvalues  $\tilde{\lambda}_1 \geq \cdots \geq \tilde{\lambda}_n$. Then the $t$-equidistant number satisfies:
        \begin{equation}\label{eq:equt_distance_bound}
            \equ_t(G) \leq \abs{\left\{i : \tilde{\lambda}_i \leq -t\right\}}+1.
        \end{equation}
    \end{theorem}
    \begin{proof}
        Assume $\equ_t \geq 2$, else the result is trivial. Let $U$ be a $t$-equidistant set of size $\equ_t$. Then, we can label the vertices in such a way that $D$ has principal $\equ_t \times \equ_t$ submatrix:
        \begin{equation*}
            B = t(J-I),
        \end{equation*}
        where $J$ is the all $1$ matrix and $I$ the identity matrix, both of appropriate size. $B$ has spectrum $\{(t(\equ_t-1))^{[1]}, (-t)^{[\equ_t-1]}\}$. Cauchy interlacing  then tells us:
        \begin{equation*}
            \tilde{\lambda}_{n-\equ_t+i} \leq -t \text{ for } i \in [2,\equ_t].
        \end{equation*}
        Hence, we must have at least $\equ_t-1$ eigenvalues less than $-t$, meaning
        \begin{align*}
            \equ_t &\leq \abs{\left\{i : \tilde{\lambda}_i \leq -t\right\}}+1.
    \qedhere    \end{align*}
    \end{proof}

    Using quotient interlacing on the distance matrix, we can obtain two more bounds on the $t$-equidistant number for the socalled transmission-regular graphs. A graph is called \emph{$d$-transmission-regular} if its distance matrix has constant row sum $d$.
    \begin{theorem}\label{thm:equt_quotient_bound}
        Let $G$ be a $d$-transmission regular graph on $n$ vertices, and let $D$ be its distance matrix, with eigenvalues $\tilde{\lambda}_1 \geq \cdots \geq \tilde{\lambda}_n$. Then the $t$-equidistant number satisfies:
        \begin{subequations}
        \begin{align}
            \equ_t &\leq \frac{(\tilde \lambda_2+t)n}{tn-d+\tilde \lambda_2} &\text{ if } tn-d+\tilde \lambda_2 > 0, \label{eq:quotient_bound_1} \\
            \equ_t &\leq \frac{(\tilde \lambda_n+t)n}{tn-d+\tilde \lambda_n} &\text{ if } tn-d+\tilde \lambda_n < 0. \label{eq:quotient_bound_2}
        \end{align}
        \end{subequations}
    \end{theorem}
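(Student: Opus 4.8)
The plan is to apply Haemers' quotient interlacing to the distance matrix $D$ with respect to the partition $\{U, V\setminus U\}$, where $U$ is a $t$-equidistant set of maximum size $s := \equ_t$. I may assume $2 \le s < n$: if $s \le 1$ the bounds are vacuous, while $s = n$ forces $D = t(J-I)$, so that $\tilde\lambda_2 = \tilde\lambda_n = -t$ and both denominators $tn - d + \tilde\lambda_2$ and $tn - d + \tilde\lambda_n$ vanish; hence the stated sign hypotheses automatically exclude this degenerate case. I also record that, since $D$ has constant row sum $d$ on a connected graph, the all-ones vector is the Perron eigenvector and $\tilde\lambda_1 = d$.

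First I would compute the $2\times 2$ quotient matrix $B=(b_{ij})$, where $b_{ij}$ is the average row sum from a vertex of part $i$ into part $j$. Because every vertex of $U$ lies at distance exactly $t$ from the other $s-1$ vertices of $U$, and $G$ is $d$-transmission-regular, the first row of $B$ equals $(t(s-1),\,d-t(s-1))$ for \emph{every} vertex of $U$; the second row, obtained by averaging over $V\setminus U$ and using that every row of $B$ sums to $d$, is $\left(\tfrac{s(d-t(s-1))}{n-s},\, d-\tfrac{s(d-t(s-1))}{n-s}\right)$. Since both rows sum to $d$, one eigenvalue of $B$ is $d=\tilde\lambda_1$, and the other is $\operatorname{trace}(B)-d$, which after clearing the denominator $n-s$ simplifies (the cross terms $\pm st(s-1)$ cancel) to
$$
\eta \;=\; \frac{t(s-1)n - sd}{n-s}.
$$

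Next I would invoke quotient interlacing \cite{haemers_interlacing_1995}: the two eigenvalues of $B$ interlace those of $D$, and since the larger one is $\tilde\lambda_1=d$, the remaining eigenvalue satisfies $\tilde\lambda_n \le \eta \le \tilde\lambda_2$. Multiplying through by the positive quantity $n-s$ and collecting the terms in $s$, the inequality $\eta \le \tilde\lambda_2$ rearranges to $s\,(tn-d+\tilde\lambda_2) \le n\,(\tilde\lambda_2+t)$; dividing by $tn-d+\tilde\lambda_2>0$ gives \eqref{eq:quotient_bound_1}. Symmetrically, $\eta \ge \tilde\lambda_n$ rearranges to $s\,(tn-d+\tilde\lambda_n) \ge n\,(\tilde\lambda_n+t)$, and dividing by $tn-d+\tilde\lambda_n<0$ reverses the inequality to yield \eqref{eq:quotient_bound_2}.

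The main obstacle is bookkeeping rather than conceptual: I must get the quotient entries right (in particular $b_{21}$, which needs a double-counting argument rather than per-vertex regularity) and verify the clean cancellation producing the compact formula for $\eta$. Once that formula is in hand, each bound is simply one of the two interlacing inequalities rearranged, and the stated sign conditions on $tn-d+\tilde\lambda_2$ and $tn-d+\tilde\lambda_n$ are precisely what is required to divide while preserving (respectively reversing) the inequality.
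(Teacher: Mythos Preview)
Your proposal is correct and follows essentially the same approach as the paper: both proofs partition $V$ into a maximum $t$-equidistant set and its complement, compute the $2\times 2$ quotient matrix of the distance matrix (using transmission-regularity to obtain the same second eigenvalue $\eta = \frac{t(s-1)n - sd}{n-s}$), and then apply quotient interlacing $\tilde\lambda_n \le \eta \le \tilde\lambda_2$ before rearranging under the stated sign hypotheses. Your explicit treatment of the degenerate cases $s\le 1$ and $s=n$ is a nice addition that the paper leaves implicit.
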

    \begin{proof}
        Let $U$ be a $t$-equidistant set of size $r = \equ_t$. Now partition the rows and columns of $D$ into $U$ and $V \setminus U$. Let $S$ be the normalized characteristic matrix of this partition. Then the quotient matrix $B$ will be:
        \begin{align*}
            B =S^{\top}DS = \begin{pmatrix}
                t(r-1) & \frac{1}{r} \sum_{u \in U} \sum_{v \in V \setminus U} d_{uv}  \\
               \frac{1}{n-r} \sum_{u \in U} \sum_{v \in V \setminus U} d_{uv} & \frac{1}{n-r} \sum_{u \in V \setminus U} \sum_{v \in V \setminus U} d_{uv}
            \end{pmatrix}.
        \end{align*}
        Now using that $D$ has constant row sum $d$, we find:
        \begin{align*}
            \sum_{u \in U} \sum_{v \in V \setminus U} d_{uv} &= \sum_{u \in V} \sum_{v \in V} d_{uv} - \left(\sum_{u \in V \setminus U} \sum_{v \in V} d_{uv} + \sum_{u \in U} \sum_{v \in U} d_{uv}\right) \\
            &= \sum_{u \in V} d - \left(\sum_{u \in V \setminus U} d + \sum_{u \in U} t(r-1)\right) = nd - ((n-r)d + tr(r-1)) \\
            &= rd - tr(r-1),
        \end{align*}
        and
        \begin{align*}
            \sum_{u \in V \setminus U} \sum_{v \in V \setminus U} d_{uv} &= \sum_{u \in V} \sum_{v \in V} d_{uv} - \left(\sum_{u \in U} \sum_{v \in V} d_{uv} + \sum_{u \in V \setminus U} \sum_{v \in  U}d_{uv}\right) \\
            &= nd - (rd +rd - tr(r-1)) = nd -2rd +tr(r-1).
        \end{align*}
        Hence
        \begin{align*}
            B = \begin{pmatrix}
                t(r-1) & d - t(r-1)  \\
              \frac{rd - tr(r-1)}{n-r} & \frac{nd -2rd +tr(r-1)}{n-r}
            \end{pmatrix}.
        \end{align*}
        Note that $B$ has eigenvalues $\mu_1 = d$ and
        \begin{align*}
            \mu_2 &= \tr(B) - d = t(r-1) + \frac{nd -2rd +tr(r-1)}{n-r} - d \\
            &= \frac{(t(r-1))(n-r) + nd -2rd +tr(r-1)-d(n-r)}{n-r} \\
            &=\frac{tn(r-1)-rd}{n-r} = \frac{r(tn-d) -tn}{n-r}.
        \end{align*}
        Now we find by interlacing that $\tilde \lambda_2 \geq \mu_2 \geq \tilde \lambda_n$. If $tn-d+\tilde \lambda_2 > 0$, then:
        \begin{align*}
            \tilde \lambda_2 \geq \frac{r(tn-d) -tn}{n-r} &\iff \tilde \lambda_2(n-r) \geq r(tn-d) -tn \\
            &\iff (\tilde \lambda_2+t)n \geq r(tn-d+\tilde \lambda_2) \\
            &\iff \frac{(\tilde \lambda_2+t)n}{tn-d+\tilde \lambda_2} \geq r.
        \end{align*}
        Similarly, if $tn-d+\tilde \lambda_n < 0$ then:
                \begin{align*}
            \tilde \lambda_n \leq \frac{r(tn-d) -tn}{n-r} &\iff \tilde \lambda_n(n-r) \leq r(tn-d) -tn \\
            &\iff (\tilde \lambda_n+t)n \leq r(tn-d+\tilde \lambda_n) \\
            &\iff \frac{(\tilde \lambda_n+t)n}{tn-d+\tilde \lambda_n} \geq r.
     \qedhere   \end{align*}
    \end{proof}
    Computational experiments suggest that the bound from \eqref{eq:quotient_bound_1} may be tight for the Pasechnik graphs $P(n)$.

%%%%%%%%%%%%%%%%%%%%%%%%%%%%%%%%%%%%%
 \subsection{Bounds on \equ}
%%%%%%%%%%%%%%%%%%%%%%%%%%%%%%%%%%%%%
    Using the bound on $\equ_t$ from Theorem \ref{thm:equt_distance_bound}, we can obtain a bound on $\equ$, captured in the following corollary.
    
    \begin{corollary}\label{cor:equ_distance_bound}
        Let $G$ be a graph, and let $D$ be its distance matrix, with eigenvalues $\tilde{\lambda}_1 \geq \cdots \geq \tilde{\lambda}_n$. Then the equidistant number satisfies:
        \begin{equation}\label{eq:equ_distance_bound}
            \equ(G) \leq \abs{\left\{i : \tilde{\lambda}_i \leq -1\right\}}+1.
        \end{equation}
    \end{corollary}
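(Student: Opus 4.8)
The plan is to derive this directly from Theorem~\ref{thm:equt_distance_bound} together with the definition of the equidistant number. Recall that $\equ(G) = \max\{\equ_t(G) : 1 \leq t \leq \diam(G)\}$, so I would begin by fixing a value $t^*$ at which this maximum is attained, i.e.\ $\equ(G) = \equ_{t^*}(G)$. The crucial point is that, by definition, the maximum ranges only over $t \geq 1$, so we are guaranteed $t^* \geq 1$; this is exactly why the threshold $-1$ (rather than some smaller value) is the correct one.

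Next I would apply Theorem~\ref{thm:equt_distance_bound} with $t = t^*$ to obtain
\begin{equation*}
    \equ(G) = \equ_{t^*}(G) \leq \abs{\left\{i : \tilde{\lambda}_i \leq -t^*\right\}} + 1.
\end{equation*}
The final step is a monotonicity observation on the counting set. Since $t^* \geq 1$ we have $-t^* \leq -1$, and therefore any index $i$ with $\tilde{\lambda}_i \leq -t^*$ also satisfies $\tilde{\lambda}_i \leq -1$. This yields the inclusion $\{i : \tilde{\lambda}_i \leq -t^*\} \subseteq \{i : \tilde{\lambda}_i \leq -1\}$, hence
\begin{equation*}
    \abs{\left\{i : \tilde{\lambda}_i \leq -t^*\right\}} \leq \abs{\left\{i : \tilde{\lambda}_i \leq -1\right\}},
\end{equation*}
and chaining the two inequalities gives the claimed bound $\equ(G) \leq \abs{\{i : \tilde{\lambda}_i \leq -1\}} + 1$.

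There is no real obstacle here, as the statement is a routine specialization; the only thing to get right is the logical direction of the set inclusion (smaller threshold gives a smaller set, so a larger bound is obtained by relaxing $-t^*$ up to $-1$) and the bookkeeping that $t^* \geq 1$ is forced by the definition of $\equ$. Everything else is inherited from Theorem~\ref{thm:equt_distance_bound}, so the proof amounts to one application of that theorem followed by this monotonicity step.
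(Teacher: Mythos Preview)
Your proof is correct and essentially identical to the paper's: the paper applies Theorem~\ref{thm:equt_distance_bound} to every $t \geq 1$ and uses the same monotonicity inclusion to get the uniform bound, then takes the maximum, whereas you first pick the maximizing $t^*$ and then apply the theorem once. These are the same argument up to trivial rephrasing.
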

    \begin{proof}
        By Theorem \ref{thm:equt_distance_bound} we have
        \begin{equation*}
             \equ_t(G) \leq \abs{\left\{i : \tilde{\lambda}_i \leq -t\right\}}+1 \leq \abs{\left\{i : \tilde{\lambda}_i \leq -1\right\}}+1
        \end{equation*}
        for all $t$. Hence, $\equ(G)$ will also be bounded by the above.
    \end{proof}

    An interesting consequence of Theorem \ref{thm:equt_distance_bound}, is that if we know the bound from Theorem \ref{thm:equt_distance_bound} is tight for some $t^*$, then we can reduce the number of values for $t$ that need to be considered when determining $\equ$. This is illustrated in the following result.

    \begin{corollary}\label{cor:equ_reduce}
        Let $G$ be a graph, and let $D$ be its distance matrix, with eigenvalues $\tilde{\lambda}_1 \geq \cdots \geq \tilde{\lambda}_n$. Then, if there exists a $t^* \in [1,\diam(G)]$ such that
        \begin{equation*}
            \equ_{t^*} = \abs{\left\{i : \tilde{\lambda}_i \leq -t^*\right\}}+1,
        \end{equation*}
        then
        \begin{equation*}
            \equ(G) = \max \{\equ_t(G) : 1 \leq t \leq t^*\}.
        \end{equation*}
    \end{corollary}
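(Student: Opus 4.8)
The plan is to leverage the fact that the upper bound supplied by Theorem \ref{thm:equt_distance_bound} is monotone in $t$, so that tightness at a single value $t^*$ forces every larger value of $t$ to be irrelevant to the maximum defining $\equ(G)$. Recall that by definition $\equ(G) = \max\{\equ_t(G) : 1 \leq t \leq \diam(G)\}$, so the claimed identity amounts to showing that no $t$ with $t^* < t \leq \diam(G)$ can produce a value of $\equ_t(G)$ strictly exceeding $\max\{\equ_t(G) : 1 \leq t \leq t^*\}$.

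First I would record the key monotonicity observation. Define $f(t) := \abs{\{i : \tilde\lambda_i \leq -t\}}+1$, the right-hand side of the bound in Theorem \ref{thm:equt_distance_bound}. As $t$ increases the threshold $-t$ decreases, so fewer eigenvalues satisfy $\tilde\lambda_i \leq -t$; hence $f$ is non-increasing in $t$. In particular, for every $t \geq t^*$ we have $f(t) \leq f(t^*)$.

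Next I would combine this with the tightness hypothesis. For any $t$ with $t^* \leq t \leq \diam(G)$, Theorem \ref{thm:equt_distance_bound} gives
\begin{equation*}
    \equ_t(G) \leq f(t) \leq f(t^*) = \abs{\{i : \tilde\lambda_i \leq -t^*\}}+1 = \equ_{t^*}(G),
\end{equation*}
where the last equality is exactly the assumed tightness at $t^*$. Thus every $\equ_t(G)$ with $t > t^*$ is bounded above by $\equ_{t^*}(G)$, which already appears in the range $1 \leq t \leq t^*$.

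Finally I would assemble the two inclusions. Since $\{\equ_t(G) : 1 \leq t \leq t^*\} \subseteq \{\equ_t(G) : 1 \leq t \leq \diam(G)\}$, we trivially have $\equ(G) \geq \max\{\equ_t(G) : 1 \leq t \leq t^*\}$; and by the previous display, the contribution of any $t > t^*$ is dominated by $\equ_{t^*}(G)$, giving the reverse inequality. Hence $\equ(G) = \max\{\equ_t(G) : 1 \leq t \leq t^*\}$. I do not expect a genuine obstacle here: the argument is essentially a one-line consequence of the monotonicity of $f$ together with Theorem \ref{thm:equt_distance_bound}. The only point deserving care is to make the monotonicity of $f$ explicit and to note that $t^*$ itself lies in the truncated range, so that $\equ_{t^*}(G)$ is legitimately counted on the right-hand side.
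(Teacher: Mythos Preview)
Your argument is correct and follows essentially the same route as the paper: you use Theorem \ref{thm:equt_distance_bound} together with the monotonicity of $t \mapsto \abs{\{i : \tilde\lambda_i \leq -t\}}+1$ to conclude $\equ_t(G) \leq \equ_{t^*}(G)$ for all $t > t^*$, which is exactly the paper's proof (stated slightly more tersely there).
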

    \begin{proof}
        For $t > t^*$, we have
        \begin{equation*}
            \equ_t \leq \abs{\left\{i : \tilde{\lambda}_i \leq -t\right\}}+1 \leq \abs{\left\{i : \tilde{\lambda}_i \leq -t^*\right\}}+1 = \equ_{t^*}.
        \end{equation*}
        Hence
        \begin{align*}
            \equ(G) &= \max\{\equ_t(G) : 1 \leq t \leq \diam(G)\} = \max \{\equ_t(G) : 1 \leq t \leq t^*\}.\qedhere
        \end{align*}
    \end{proof}

For obtaining the next bound on $\equ$ we will use Lemma \ref{lem:eqwalpha} combined with a bound on $\alpha$ and a bound on $\omega$. 
    
     \begin{theorem}[Ratio bound, unpublished, see e.g. \cite{H2021}] \label{thm:ratio_bound}
         Let $G$ be a regular graph with $n$ vertices and adjacency eigenvalues $\lambda_1 \geq \cdots \geq \lambda_n$. Then the independence number of $G$ satisfies:
         \begin{equation*}
             \alpha(G) \leq n \frac{-\lambda_n}{\lambda_1-\lambda_n}.
         \end{equation*}
     \end{theorem}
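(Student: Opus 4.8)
The plan is to use eigenvalue interlacing on a $2\times 2$ quotient matrix, mirroring exactly the technique employed in the proof of Theorem~\ref{thm:equt_quotient_bound}. Let $k = \lambda_1$ denote the common degree of the regular graph $G$, and let $U$ be an independent set of maximum size $\alpha := \alpha(G)$; the bound is immediate if $\alpha \leq 1$, so assume $\alpha \geq 2$. I would partition $V(G)$ into $U$ and $V \setminus U$, let $S$ be the normalized characteristic matrix of this partition, and form the quotient matrix $B = S^{\top} A S$ of the adjacency matrix $A$.

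First I would compute the entries of $B$. Since $U$ is independent, the top-left entry (the average internal degree inside $U$) is $0$. Because $G$ is $k$-regular, every vertex of $U$ sends all $k$ of its edges into $V \setminus U$, so the top-right entry is exactly $k$; counting these same edges from the other side gives a bottom-left entry of $k\alpha/(n-\alpha)$, and regularity forces the bottom-right entry to be $k - k\alpha/(n-\alpha)$. This yields
\begin{equation*}
    B = \begin{pmatrix} 0 & k \\[2pt] \dfrac{k\alpha}{n-\alpha} & k - \dfrac{k\alpha}{n-\alpha} \end{pmatrix}.
\end{equation*}
Since each row of $B$ sums to $k$, one eigenvalue is $\mu_1 = k = \lambda_1$, and from the trace the other is $\mu_2 = \tr(B) - k = -k\alpha/(n-\alpha)$.

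The key step is interlacing: because $B$ is a quotient matrix of the symmetric matrix $A$, its eigenvalues interlace those of $A$, so in particular $\mu_2 \geq \lambda_n$. Substituting $\mu_2 = -k\alpha/(n-\alpha)$ gives $-k\alpha/(n-\alpha) \geq \lambda_n$, i.e.\ $k\alpha \leq -\lambda_n(n-\alpha)$, where $-\lambda_n \geq 0$ since $\lambda_n \leq 0$ for any graph with an edge. Collecting the terms in $\alpha$ yields $\alpha(k - \lambda_n) \leq -\lambda_n n$, and dividing by $\lambda_1 - \lambda_n = k - \lambda_n > 0$ gives the claimed bound $\alpha(G) \leq n(-\lambda_n)/(\lambda_1 - \lambda_n)$.

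There is no genuine obstacle here, as this is the classical Hoffman ratio bound; the only points needing care are (i) verifying that regularity makes $\mathbf{1}$ an eigenvector, so that the row sums of $B$ equal $k$ and $\mu_1 = \lambda_1$, and (ii) tracking signs correctly, since $\lambda_n$ is negative one must preserve the direction of the inequality when dividing. An equivalent route, if one prefers to avoid quotient matrices, is to take the $0$--$1$ characteristic vector $\chi$ of $U$, expand it in an orthonormal eigenbasis of $A$, and combine $\chi^{\top} A \chi = 0$ with $\langle \chi, \tfrac{1}{\sqrt{n}}\mathbf{1}\rangle = \alpha/\sqrt{n}$; bounding the non-Perron eigenvalue contributions below by $\lambda_n$ reproduces the same inequality after one line of algebra.
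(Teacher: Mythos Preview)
The paper does not give its own proof of this theorem: it is stated as a known result (the Hoffman ratio bound) with a citation to \cite{H2021}, and is then used as a black box in Corollary~\ref{cor:haemers_equ}. Your proof is correct and is in fact the standard quotient-matrix interlacing argument for the Hoffman bound; it is entirely in the spirit of the paper, mirroring the technique of Theorem~\ref{thm:equt_quotient_bound} as you note.
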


    For bounding $\omega$ we will use the earlier stated result by Haemers (Proposition \ref{prop:omega_haemers}).

    \begin{corollary}\label{cor:haemers_equ}
            Let $G$ be a regular graph with adjacency eigenvalues $\lambda_1 \geq \cdots \geq \lambda_n$. Then the equidistant number of $G$ satisfies:
        \begin{equation}\label{eq:haemers_equ}
            \equ(G) \leq n\cdot \max\left\{\frac{-\lambda_n}{\lambda_1-\lambda_n},\frac{1+\lambda_2}{n-\lambda_1+\lambda_2}\right\}.
        \end{equation}
    \end{corollary}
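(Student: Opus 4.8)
The plan is to chain together the three earlier results directly, since the corollary is essentially a bookkeeping combination of them. The starting point is Lemma~\ref{lem:eqwalpha}, which gives the clean upper bound $\equ(G) \leq \max\{\omega(G), \alpha(G)\}$ for \emph{any} graph $G$. This reduces the task of bounding the equidistant number to the separate tasks of bounding the clique number and the independence number, for which eigenvalue bounds are already available in the regular case.

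First I would bound the independence term: applying the Ratio bound (Theorem~\ref{thm:ratio_bound}) to the regular graph $G$ yields
\begin{equation*}
    \alpha(G) \leq n \frac{-\lambda_n}{\lambda_1-\lambda_n}.
\end{equation*}
Next I would bound the clique term by invoking Haemers' inequality (Proposition~\ref{prop:omega_haemers}), which for a regular graph gives
\begin{equation*}
    \omega(G) \leq n\frac{1+\lambda_2}{n-\lambda_1+\lambda_2}.
\end{equation*}
Both hypotheses are met because $G$ is assumed regular, so no extra work is needed to justify applying these two bounds.

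The final step is simply to substitute these two estimates into the maximum from Lemma~\ref{lem:eqwalpha} and use monotonicity of the $\max$ operator: since $\omega(G)$ and $\alpha(G)$ are each dominated by their respective eigenvalue bounds, their maximum is dominated by the maximum of the two bounds, giving
\begin{equation*}
    \equ(G) \leq \max\left\{ n\frac{1+\lambda_2}{n-\lambda_1+\lambda_2}, \; n \frac{-\lambda_n}{\lambda_1-\lambda_n}\right\} = n\cdot \max\left\{\frac{-\lambda_n}{\lambda_1-\lambda_n},\frac{1+\lambda_2}{n-\lambda_1+\lambda_2}\right\},
\end{equation*}
where the common factor $n$ is pulled outside the maximum. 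There is no genuine obstacle here; the only point requiring a moment's care is confirming that both $\omega$ and $\alpha$ are being bounded for the \emph{same} regular graph $G$ (not a transformed graph such as $G^{[\sharp t]}$), so that the eigenvalues $\lambda_1,\lambda_2,\lambda_n$ in the two bounds refer to one and the same spectrum, allowing the two fractions to be combined under a single $\max$.
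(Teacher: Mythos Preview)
Your argument is correct and follows exactly the route the paper takes: invoke Lemma~\ref{lem:eqwalpha} to get $\equ(G)\le\max\{\omega(G),\alpha(G)\}$, then bound $\alpha(G)$ via Theorem~\ref{thm:ratio_bound} and $\omega(G)$ via Proposition~\ref{prop:omega_haemers}. The paper's own proof is the one-line version of what you wrote out in detail.
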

    \begin{proof}
        Use Lemma \ref{lem:eqwalpha}, and then bound $\alpha$ and $\omega$ with Theorem \ref{thm:ratio_bound} and Proposition \ref{prop:omega_haemers}, respectively.
    \end{proof}

%%%%%%%%%%%%%%%%%%%%%%%%%%%%%%%%%%%%
\subsection{Bounds performance}
%%%%%%%%%%%%%%%%%%%%%%%%%%%%%%%%%%%%%

Finally, using a computational approach, we investigate the tightness of our bounds, how they compare with each other and with the previous bound from Proposition \ref{prop:eqt_degree_bound}. The results can be found in the Appendix \ref{appendix:boundsperformance}. 

In Tables \ref{tab:equ_2_bounds} and \ref{tab:equ_3_bounds}, the performance of the distance spectrum bounds on $\equ_t$ (Theorems \ref{thm:equt_distance_bound} and \ref{thm:equt_quotient_bound}) is compared to the performance of the induced adjacency spectrum bounds from $\alpha_t$ (Propositions \ref{prop:1st_inertial_equ} and \ref{prop:ratio_equ}) and the induced spectral bounds from $\omega$ and $\Phi$ (Propositions \ref{prop:omega_equt_UB} and \ref{prop:equ_Phi_bound}) for $t=2,3$, respectively. In Table \ref{tab:equ_bounds} the performance of the distance spectrum bound on $\equ$ (Corollary \ref{cor:equ_distance_bound}) is compared to the performance of the induced adjacency spectrum bound from $\alpha$ and $\omega$ (Corollary \ref{cor:haemers_equ}). These bounds were computed using SageMath.
    
    In these three tables, a graph name is in boldface when at least one of the new bounds is tight. Note that while Proposition \ref{prop:equ_Phi_bound} is only tight for trivial cases in Tables \ref{tab:equ_2_bounds} and \ref{tab:equ_3_bounds}, we have found several smaller graphs for which Proposition \ref{prop:equ_Phi_bound} gives a non-trivial tight bound.

%%%%%%%%%%%%%%%%%%%%%%%%%%%%%%%%%%%%%%%%%%%%%%%%%%%%%%%%%%%%%%%%%%%%%%%%%%%%%%%%%%%%%%%%%%%%%%%%%%%%%%%%%%%%%%%%%%%%%%%%%%%%%
\subsection*{Acknowledgements}
%%%%%%%%%%%%%%%%%%%%%%%%%%%%%%%%%%%%%%%%%%%%%%%%%%%%%%%%%%%
The authors thank James Tuite for bringing the equidistant number to their attention, Sjanne Zeijlemaker for helping with the LP from Section \ref{appendix:LPratio}, and Frits Spieksma for useful discussions on Section \ref{sec:complexity}. Aida Abiad is supported by the Dutch Research Council through the grant VI.Vidi.213.085. 

 %%%%%%%%%%%%%%%%%%%%%%%%%%%%%%%%%%%%%%%%%%%%%%%%%%%%%%%%%%%

%%%%%%%%%%%%%%%%%%%%%%%%%%%%%%%%%%%%%%%%%%%%%%%%%%%%%%%%%%%

\newpage 
%%%%%%%%%%%%%%%%%%%%%%%%%%%%%%%%%%%%%%%%%%%%%%%%%%%%%%%%%%%
\section{Appendix}
%%%%%%%%%%%%%%%%%%%%%%%%%%%%%%%%%%%%%%%%%%%%%%%%%%%%%%%%%%%

%%%%%%%%%%%%%%%%%%%%%%%%%%%%%%%%%%%%%%%%%%%%%%%%%%%%%%%%%%%
\subsection{MILP for the Inertial-type bound (Proposition \ref{prop:1st_inertial_equ})}\label{appendix:MILPinertia}
%%%%%%%%%%%%%%%%%%%%%%%%%%%%%%%%%%%%%%%%%%%%%%%%%%%%%%%%%%%
Below is an MILP implementation by Abiad et al.\ \cite{ACFNS2022} for the Inertial-type bound on $\alpha_t$, which can also be used for finding the best polynomial in Proposition \ref{prop:1st_inertial_equ}. 

Let $G = (V,E)$ have adjacency matrix $A$ having distinct eigenvalues $\theta_0 > \cdots > \theta_d$ with multiplicities $\mathbf{m}=(m_0,\hdots,m_d)$. Let $u \in V$, and let $p(x) = a_tx^t + \cdots + a_0$, $\mathbf{b} = (b_0,\hdots,b_d) \in \{0,1\}^{d+1}$, $M\in \mathbb{R}$ large, and $\varepsilon \in \mathbb{R}$ small, then we have the following MILP:
    \begin{equation}\label{eq:1st_inertial_MILP}
        \boxed{\begin{aligned}
         \text{variables: } &(a_0 \hdots,a_t), (b_0,\hdots,b_d) \\
         \text{parameters: } &t, M, \varepsilon\\
         \text{input: }&\text{For a regular graph }G \text{, its adjacency matrix } A, \\
         &\text{  its spectrum } \{\theta_0^{m_0},\hdots,\theta_d^{m_d}\}, \text{ a vertex } u \in V\\
         \text{output: } &(a_0 \hdots,a_t) \text{,  the coefficients of a polynomial } p \\      
        \ \\
        \text{minimize} \ \  &\mathbf{m}^\top\mathbf{b} \\
        \text{subject to}\ \  &\sum_{i=0}^t a_i(A^i)_{vv} \geq 0, \ \ v \in V \setminus \{u\} \\
        &\sum_{i=0}^t a_i(A^i)_{uu} = 0 \\
        &\sum_{i=0}^t a_i\theta_j^i - Mb_j + \varepsilon \leq 0, \ j=0,\hdots,d \\
        & \mathbf{b} \in \{0,1\}^{d+1}
        \end{aligned}}
    \end{equation}
    Implementing such an MILP for all $u \in V$ will find the best $p$ for $G$. Note that when the graph is walk-regular, this MILP only needs to be run once.

\newpage

 %%%%%%%%%%%%%%%%%%%%%%%%%%%%%%%%%%%%%%%%%%%%%%%%%%%%%%%%%%%
\subsection{LP for the Ratio-type bound (Proposition \ref{prop:ratio_equ})}\label{appendix:LPratio}
%%%%%%%%%%%%%%%%%%%%%%%%%%%%%%%%%%%%%%%%%%%%%%%%%%%%%%%%%%%   
The following LP can be used to compute the optimal polynomial of Proposition \ref{prop:ratio_equ}.

    Let $G = (V,E)$ have adjacency matrix $A$ and distinct eigenvalues $\theta_0 > \cdots > \theta_d$. Then, for some $u \in V$ and $l \in [1,d]$ we have the LP:
    
    \begin{equation}\label{eq:ratio_chi_MILP}
        \boxed{\begin{aligned}
            \text{variables: } &(a_0 \hdots,a_t) \\
            \text{parameters: } &t\\
            \text{input: }&\text{For a graph }G \text{, its adjacency matrix } A, \\
            &\text{The spectrum of } A: \{\theta_0,\hdots,\theta_d\}.\\
            &\text{A vertex } u \in V. \text{ An } l \in [1,d].\\
            \text{output: }&(a_0 \hdots,a_t) \text{, the coefficients of a polynomial } p \\
            \ \\
            \text{maximize} \ \ &\sum_{i=0}^t a_i \theta_0^i -\sum_{i=0}^t a_i\theta_l^i\\
            \text{subject to} \ \ & \sum_{i=0}^t a_i((A^i)_{vv} - (A^i)_{uu}) \leq 0, \ v \in V \setminus \{u\} \\
            &\sum_{i=0}^t a_i((A^i)_{uu} - \theta_l^i) = 1\\
            &\sum_{i=0}^t a_i(\theta_0^i - \theta_j^i) > 0, \ \ j \in [1,d] \\
            &\sum_{i=0}^t a_i(\theta_j^i - \theta_l^i) \geq 0, \ \ j \in [1,d]
        \end{aligned}}
    \end{equation}
    Solving this for all $u \in V$ and $l \in [1,d]$ yields the best polynomial.

\newpage

%%%%%%%%%%%%%%%%%%%%%%%%%%%%%%%%%%%%%%%%%%%%%%%%%%%%%%%%%%%
\subsection{The performance of the eigenvalue bounds from Section \ref{sec:spec_equ}}\label{appendix:boundsperformance}
%%%%%%%%%%%%%%%%%%%%%%%%%%%%%%%%%%%%%%%%%%%%%%%%%%%%%%%%%%%

\begin{table}[!htp]
    \centering
    \tiny
    \begin{tabular}{l|cccccccc|c}
        \hline
        Graph & \eqref{eq:eqt_degree_bound} & \eqref{eq:1st_inertial_equ} & \eqref{eq:ratio_equ} & \eqref{eq:omega_equt_UB} & \eqref{eq:equ_Phi_bound} & \eqref{eq:equt_distance_bound} & \eqref{eq:quotient_bound_1} & \eqref{eq:quotient_bound_2} & $\equ_2$ \\
        \hline
        Balaban 10-cage & $7$ & $36$ & $35$ & $7$ & $10$ & $37$ & - & $13$ & $3$ \\
        Frucht graph & $7$ & $6$ & $5$ & - & $41$ & $4$ & - & - & $3$ \\
        Meredith Graph & $13$ & $45$ & $34$ & $6$ & $9$ & $40$ & - & - & $5$ \\
        Moebius-Kantor Graph & $7$ & $8$ & $8$ & $11$ & $36$ & $9$ & - & $9$ & $4$ \\
        Bidiakis cube & $7$ & $8$ & $5$ & - & $42$ & $4$ & - & - & $3$ \\
        \textbf{Gosset Graph} & $703$ & $8$ & $5$ & $7$ & $27$ & $8$ & $\textbf{4}$ & - & $4$ \\
        Balaban 11-cage & $7$ & $59$ & $54$ & $6$ & $6$ & $52$ & - & - & $3$ \\
        Moser spindle & $13$ & $3$ & - & - & $54$ & $3$ & - & - & $2$ \\
        Gray graph & $7$ & $35$ & $27$ & $7$ & $12$ & $25$ & - & - & $3$ \\
        Nauru Graph & $7$ & $14$ & $12$ & $8$ & $27$ & $16$ & - & $8$ & $3$ \\
        Blanusa First Snark Graph & $7$ & $8$ & $8$ & $5$ & $29$ & $7$ & - & - & $4$ \\
        \textbf{Grotzsch graph} & $21$ & $\textbf{5}$ & - & - & $47$ & $6$ & - & - & $5$ \\
        Pappus Graph & $7$ & $11$ & $9$ & $10$ & $36$ & $7$ & - & $7$ & $3$ \\
        Blanusa Second Snark Graph & $7$ & $8$ & $7$ & $4$ & $27$ & $6$ & - & - & $3$ \\
        \textbf{Hall-Janko graph} & $1261$ & $37$ & $\textbf{10}$ & $\textbf{10}$ & $18$ & $37$ & $\textbf{10}$ & - & $10$ \\
        \textbf{Poussin Graph} & $31$ & $6$ & - & - & $38$ & $\textbf{4}$ & - & - & $4$ \\
        Brinkmann graph & $13$ & $9$ & $8$ & $8$ & $43$ & $10$ & $21$ & $21$ & $5$ \\
        Harborth Graph & $13$ & $20$ & $20$ & - & $17$ & $15$ & - & - & $4$ \\
        Perkel Graph & $31$ & $20$ & $19$ & $7$ & $21$ & $19$ & - & $19$ & $6$ \\
        Brouwer-Haemers & $381$ & $20$ & $21$ & $21$ & $24$ & $61$ & $21$ & - & $15$ \\
        Harries Graph & $7$ & $35$ & $35$ & $7$ & $9$ & $31$ & - & $13$ & $3$ \\
        \textbf{Petersen graph} & $7$ & $\textbf{4}$ & $\textbf{4}$ & $\textbf{4}$ & $43$ & $6$ & $4$ & - & $4$ \\
        Bucky Ball & $7$ & $30$ & $27$ & $5$ & $10$ & $25$ & - & $17$ & $3$ \\
        Harries-Wong graph & $7$ & $35$ & $35$ & $7$ & $9$ & $31$ & - & $13$ & $3$ \\
        \textbf{Robertson Graph} & $13$ & $8$ & $\textbf{7}$ & $9$ & $49$ & $10$ & $12$ & $40$ & $7$ \\
        \textbf{Heawood graph} & $\textbf{7}$ & $\textbf{7}$ & $\textbf{7}$ & $14$ & $36$ & $8$ & $21$ & $10$ & $7$ \\
        \textbf{Schläfli graph} & $241$ & $7$ & $\textbf{3}$ & $\textbf{3}$ & $24$ & $7$ & $\textbf{3}$ & - & $3$ \\
        \textbf{Herschel graph} & $13$ & $7$ & - & - & $39$ & $\textbf{6}$ & - & - & $6$ \\
        \textbf{Shrikhande graph} & $31$ & $7$ & $\textbf{4}$ & $\textbf{4}$ & $36$ & $7$ & $\textbf{4}$ & - & $4$ \\
        \textbf{Higman-Sims graph} & $463$ & $\textbf{22}$ & $26$ & $26$ & $24$ & $78$ & $26$ & - & $22$ \\
        \textbf{Hoffman Graph} & $13$ & $11$ & $\textbf{8}$ & - & $36$ & $\textbf{8}$ & - & - & $8$ \\
        Sousselier Graph & $21$ & $6$ & - & - & $45$ & $7$ & - & - & $5$ \\
        \textbf{Clebsch graph} & $21$ & $\textbf{5}$ & $6$ & $6$ & $36$ & $11$ & $6$ & - & $5$ \\
        \textbf{Hoffman-Singleton graph} & $43$ & $21$ & $\textbf{15}$ & $\textbf{15}$ & $36$ & $29$ & $15$ & - & $15$ \\
        Sylvester Graph & $21$ & $17$ & $13$ & $9$ & $29$ & $17$ & - & $16$ & $5$ \\
        Coxeter Graph & $7$ & $13$ & $12$ & $4$ & $16$ & $9$ & - & $7$ & $3$ \\
        Holt graph & $13$ & $10$ & $10$ & $5$ & $27$ & $11$ & - & $14$ & $4$ \\
        Szekeres Snark Graph & $7$ & $24$ & $23$ & $5$ & $11$ & $17$ & - & - & $4$ \\
        Desargues Graph & $7$ & $10$ & $10$ & $9$ & $29$ & $6$ & - & $9$ & $4$ \\
        Horton Graph & $7$ & $50$ & $48$ & $7$ & $7$ & $40$ & - & - & $4$ \\
        \textbf{Thomsen graph} & $7$ & $5$ & $\textbf{3}$ & $6$ & $36$ & $5$ & $3$ & - & $3$ \\
        Tietze Graph & $7$ & $6$ & $5$ & - & $41$ & $6$ & - & - & $4$ \\
        Double star snark & $7$ & $15$ & $13$ & $5$ & $17$ & $10$ & - & $12$ & $3$ \\
        Krackhardt Kite Graph & $31$ & $4$ & - & - & $49$ & $4$ & - & - & $3$ \\
        Durer graph & $7$ & $7$ & $5$ & - & $39$ & $4$ & - & - & $3$ \\
        Klein 3-regular Graph & $7$ & $29$ & $25$ & $4$ & $9$ & $16$ & - & $10$ & $3$ \\
        \textbf{Truncated Tetrahedron} & $7$ & $6$ & $4$ & $\textbf{3}$ & $29$ & $4$ & $20$ & $9$ & $3$ \\
        Dyck graph & $7$ & $16$ & $16$ & $8$ & $18$ & $17$ & - & $9$ & $3$ \\
        \textbf{Klein 7-regular Graph} & $43$ & $9$ & $\textbf{6}$ & $\textbf{6}$ & $30$ & $9$ & $9$ & - & $6$ \\
        Tutte 12-Cage & $7$ & $77$ & $63$ & $7$ & $5$ & $71$ & - & $10$ & $3$ \\
        Ellingham-Horton 54-graph & $7$ & $29$ & $27$ & $7$ & $12$ & $24$ & - & - & $4$ \\
        Tutte-Coxeter graph & $7$ & $20$ & $15$ & $8$ & $22$ & $19$ & - & $7$ & $3$ \\
        Ellingham-Horton 78-graph & $7$ & $40$ & $39$ & $7$ & $9$ & $30$ & - & - & $4$ \\
        Ljubljana graph & $7$ & $63$ & $56$ & $7$ & $6$ & $43$ & - & - & $3$ \\
        Errera graph & $31$ & $7$ & - & - & $35$ & $5$ & - & - & $4$ \\
        \textbf{Wagner Graph} & $7$ & $\textbf{3}$ & $\textbf{3}$ & $\textbf{3}$ & $45$ & $5$ & $\textbf{3}$ & - & $3$ \\
        F26A Graph & $7$ & $13$ & $13$ & $8$ & $23$ & $7$ & - & $8$ & $3$ \\
        \textbf{M22 Graph} & $241$ & $\textbf{21}$ & $\textbf{21}$ & $\textbf{21}$ & $26$ & $56$ & $21$ & - & $21$ \\
        Flower Snark & $7$ & $10$ & $9$ & $5$ & $27$ & $8$ & - & - & $4$ \\
        Markstroem Graph & $7$ & $10$ & $10$ & - & $20$ & $7$ & - & - & $3$ \\
        Wells graph & $21$ & $13$ & $12$ & $10$ & $36$ & $9$ & $167$ & $24$ & $5$ \\
        \textbf{Folkman Graph} & $13$ & $15$ & $\textbf{10}$ & - & $36$ & $\textbf{10}$ & - & - & $10$ \\
        Wiener-Araya Graph & $13$ & $19$ & - & - & $17$ & $14$ & - & - & $4$ \\
        Foster Graph & $7$ & $50$ & $45$ & $7$ & $7$ & $42$ & - & $13$ & $3$ \\
        McGee graph & $7$ & $12$ & $11$ & $5$ & $22$ & $10$ & - & $9$ & $3$ \\
        \textbf{Hexahedron} & $7$ & $\textbf{4}$ & $\textbf{4}$ & $8$ & $36$ & $\textbf{4}$ & $\textbf{4}$ & - & $4$ \\
        \textbf{Dodecahedron} & $7$ & $11$ & $8$ & $\textbf{3}$ & $22$ & $8$ & - & $9$ & $3$ \\
        \textbf{Octahedron} & $13$ & $4$ & $\textbf{2}$ & $\textbf{2}$ & $24$ & $4$ & $\textbf{2}$ & - & $2$ \\
        \textbf{Icosahedron} & $21$ & $4$ & $\textbf{3}$ & $4$ & $34$ & $4$ & $\textbf{4}$ & - & $3$ \\
        \hline
    \end{tabular}
    \caption{Comparison of $\equ_2$ bounds for Sage named graphs. A ``-" indicates the graph does not satisfy the conditions for the bound to be applicable. Graph names are in bold when one of the new bounds is tight.}
    \label{tab:equ_2_bounds}
\end{table}

\begin{table}[!htp]
    \centering
    \tiny
    \begin{tabular}{l|cccccccc|c}
        \hline
        Graph & \eqref{eq:eqt_degree_bound} & \eqref{eq:1st_inertial_equ} & \eqref{eq:ratio_equ} &  \eqref{eq:omega_equt_UB} & \eqref{eq:equ_Phi_bound} & \eqref{eq:equt_distance_bound} & \eqref{eq:quotient_bound_1} & \eqref{eq:quotient_bound_2} & $\equ_3$ \\
        \hline
        Balaban 10-cage & $13$ & $19$ & $17$ & $5$ & $19$ & $28$ & - & $22$ & $2$ \\
        \textbf{Frucht graph} & $13$ & $\textbf{3}$ & $\textbf{3}$ & - & $39$ & $4$ & - & - & $3$ \\
        Meredith Graph & $37$ & $10$ & $14$ & - & $16$ & $15$ & - & - & $2$ \\
        Moebius-Kantor Graph & $13$ & $6$ & $4$ & $3$ & $38$ & $5$ & $5$ & - & $2$ \\
        Bidiakis cube & $13$ & $4$ & $3$ & - & $36$ & $4$ & - & - & $2$ \\
        \textbf{Gosset Graph} & $18253$ & $8$ & $\textbf{2}$ & $\textbf{2}$ & $3$ & $8$ & $\textbf{2}$ & - & $2$ \\
        Balaban 11-cage & $13$ & $39$ & $27$ & $5$ & $9$ & $52$ & - & - & $2$ \\
        \textbf{Moser spindle} & $37$ & $2$ & - & $\textbf{1}$ & $\textbf{1}$ & $2$ & - & - & $1$ \\
        Gray graph & $13$ & $19$ & $11$ & $5$ & $24$ & $7$ & - & - & $2$ \\
        Nauru Graph & $13$ & $8$ & $6$ & $4$ & $46$ & $7$ & $10$ & - & $2$ \\
        Blanusa First Snark Graph & $13$ & $4$ & $4$ & - & $31$ & $5$ & - & - & $3$ \\
        \textbf{Grotzsch graph} & $81$ & $\textbf{1}$ & - & $\textbf{1}$ & $\textbf{1}$ & $6$ & - & - & $1$ \\
        Pappus Graph & $13$ & $7$ & $3$ & $3$ & $40$ & $7$ & $6$ & - & $2$ \\
        \textbf{Blanusa Second Snark Graph} & $13$ & $\textbf{4}$ & $\textbf{4}$ & - & $29$ & $5$ & - & - & $4$ \\
        \textbf{Hall-Janko graph} & $44101$ & $\textbf{1}$ & $\textbf{1}$ & $\textbf{1}$ & $\textbf{1}$ & $37$ & $3$ & - & $1$ \\
        Poussin Graph & $151$ & $4$ & - & - & $28$ & $4$ & - & - & $2$ \\
        \textbf{Brinkmann graph} & $37$ & $6$ & $\textbf{3}$ & $\textbf{3}$ & $21$ & $8$ & $4$ & - & $3$ \\
        Harborth Graph & $37$ & $13$ & $10$ & - & $21$ & $12$ & - & - & $3$ \\
        Perkel Graph & $151$ & $18$ & $\textbf{5}$ & $7$ & $16$ & $19$ & $6$ & - & $5$ \\
        \textbf{Brouwer-Haemers} & $7221$ & $\textbf{1}$ & $\textbf{1}$ & $\textbf{1}$ & $\textbf{1}$ & $61$ & $6$ & - & $1$ \\
        Harries Graph & $13$ & $18$ & $17$ & $5$ & $19$ & $27$ & - & $22$ & $2$ \\
        \textbf{Petersen graph} & $13$ & $\textbf{1}$ & $\textbf{1}$ & $\textbf{1}$ & $\textbf{1}$ & $6$ & $2$ & - & $1$ \\
        Bucky Ball & $13$ & $16$ & $14$ & $5$ & $11$ & $15$ & - & $23$ & $3$ \\
        Harries-Wong graph & $13$ & $18$ & $17$ & $5$ & $19$ & $27$ & - & $22$ & $2$ \\
        \textbf{Robertson Graph} & $37$ & $5$ & $\textbf{3}$ & $\textbf{3}$ & $15$ & $8$ & $\textbf{3}$ & - & $3$ \\
        \textbf{Heawood graph} & $13$ & $\textbf{2}$ & $\textbf{2}$ & $\textbf{2}$ & $34$ & $8$ & $3$ & - & $2$ \\
        \textbf{Schläfli graph} & $3601$ & $\textbf{1}$ & $\textbf{1}$ & $\textbf{1}$ & $\textbf{1}$ & $7$ & $\textbf{1}$ & - & $1$ \\
        Herschel graph & $37$ & $3$ & - & - & $28$ & $4$ & - & - & $2$ \\
        \textbf{Shrikhande graph} & $151$ & $\textbf{1}$ & $\textbf{1}$ & $\textbf{1}$ & $\textbf{1}$ & $7$ & $2$ & - & $1$ \\
        \textbf{Higman-Sims graph} & $9703$ & $\textbf{1}$ & $\textbf{1}$ & $\textbf{1}$ & $\textbf{1}$ & $78$ & $6$ & - & $1$ \\
        \textbf{Hoffman Graph} & $37$ & $5$ & $\textbf{2}$ & $3$ & $31$ & $5$ & - & - & $2$ \\
        Sousselier Graph & $81$ & $5$ & - & - & $36$ & $6$ & - & - & $3$ \\
        \textbf{Clebsch graph} & $81$ & $\textbf{1}$ & $\textbf{1}$ & $\textbf{1}$ & $\textbf{1}$ & $11$ & $2$ & - & $1$ \\
        \textbf{Hoffman-Singleton graph} & $253$ & $\textbf{1}$ & $\textbf{1}$ & $\textbf{1}$ & $\textbf{1}$ & $29$ & $3$ & - & $1$ \\
        \textbf{Sylvester Graph} & $81$ & $10$ & $\textbf{6}$ & $\textbf{6}$ & $14$ & $17$ & $\textbf{6}$ & - & $6$ \\
        \textbf{Coxeter Graph} & $13$ & $\textbf{7}$ & $\textbf{7}$ & $\textbf{7}$ & $22$ & $9$ & $13$ & $196$ & $7$ \\
        Holt graph & $37$ & $7$ & $4$ & $4$ & $25$ & $7$ & $4$ & - & $3$ \\
        Szekeres Snark Graph & $13$ & $13$ & $12$ & - & $15$ & $13$ & - & - & $2$ \\
        Desargues Graph & $13$ & $6$ & $5$ & $3$ & $36$ & $5$ & $6$ & $90$ & $2$ \\
        Horton Graph & $13$ & $30$ & $24$ & - & $13$ & $32$ & - & - & $2$ \\
        \textbf{Thomsen graph} & $13$ & $\textbf{1}$ & $\textbf{1}$ & $\textbf{1}$ & $\textbf{1}$ & $\textbf{1}$ & $2$ & - & $1$ \\
        \textbf{Tietze Graph} & $13$ & $4$ & $\textbf{3}$ & - & $36$ & $6$ & - & - & $3$ \\
        Double star snark & $13$ & $9$ & $7$ & $6$ & $21$ & $10$ & $23$ & $33$ & $4$ \\
        Krackhardt Kite Graph & $151$ & $4$ & - & - & $43$ & $3$ & - & - & $2$ \\
        \textbf{Durer graph} & $13$ & $3$ & $\textbf{2}$ & - & $30$ & $4$ & - & - & $2$ \\
        Klein 3-regular Graph & $13$ & $19$ & $13$ & $6$ & $13$ & $16$ & - & $19$ & $3$ \\
        \textbf{Truncated Tetrahedron} & $13$ & $4$ & $\textbf{3}$ & $\textbf{3}$ & $31$ & $4$ & $\textbf{3}$ & - & $3$ \\
        Dyck graph & $13$ & $8$ & $8$ & $4$ & $33$ & $7$ & - & $23$ & $2$ \\
        \textbf{Klein 7-regular Graph} & $253$ & $9$ & $\textbf{3}$ & $\textbf{3}$ & $9$ & $9$ & $\textbf{3}$ & - & $3$ \\
        Tutte 12-Cage & $13$ & $44$ & $28$ & $5$ & $10$ & $71$ & - & $16$ & $2$ \\
        Ellingham-Horton 54-graph & $13$ & $20$ & $13$ & - & $20$ & $16$ & - & - & $2$ \\
        Tutte-Coxeter graph & $13$ & $10$ & $6$ & $4$ & $50$ & $10$ & $20$ & $70$ & $2$ \\
        Ellingham-Horton 78-graph & $13$ & $27$ & $19$ & - & $16$ & $26$ & - & - & $2$ \\
        Ljubljana graph & $13$ & $35$ & $27$ & $5$ & $11$ & $43$ & - & - & $2$ \\
        Errera graph & $151$ & $4$ & - & - & $30$ & $4$ & - & - & $2$ \\
        \textbf{Wagner Graph} & $13$ & $\textbf{1}$ & $2$ & $\textbf{1}$ & $\textbf{1}$ & $3$ & $2$ & - & $1$ \\
        F26A Graph & $13$ & $7$ & $6$ & $3$ & $41$ & $7$ & $14$ & $46$ & $2$ \\
        \textbf{M22 Graph} & $3601$ & $\textbf{1}$ & $\textbf{1}$ & $\textbf{1}$ & $\textbf{1}$ & $56$ & $5$ & - & $1$ \\
        Flower Snark & $13$ & $7$ & $5$ & - & $31$ & $6$ & - & - & $3$ \\
        Markstroem Graph & $13$ & $7$ & $6$ & - & $25$ & $6$ & - & - & $3$ \\
        Wells graph & $81$ & $9$ & $3$ & $3$ & $13$ & $9$ & $3$ & - & $2$ \\
        Folkman Graph & $37$ & $5$ & $3$ & $4$ & $36$ & $5$ & - & - & $2$ \\
        Wiener-Araya Graph & $37$ & $12$ & - & - & $21$ & $10$ & - & - & $3$ \\
        Foster Graph & $13$ & $23$ & $22$ & $5$ & $14$ & $14$ & - & $19$ & $2$ \\
        McGee graph & $13$ & $7$ & $6$ & $7$ & $36$ & $9$ & $8$ & - & $5$ \\
        \textbf{Hexahedron} & $13$ & $\textbf{2}$ & $\textbf{2}$ & $\textbf{2}$ & $18$ & $4$ & $\textbf{2}$ & - & $2$ \\
        \textbf{Dodecahedron} & $13$ & $\textbf{4}$ & $\textbf{4}$ & $\textbf{4}$ & $21$ & $\textbf{4}$ & $6$ & $57$ & $4$ \\
        \textbf{Octahedron} & $37$ & $\textbf{1}$ & $\textbf{1}$ & $\textbf{1}$ & $\textbf{1}$ & $\textbf{1}$ & $\textbf{1}$ & - & $1$ \\
        \textbf{Icosahedron} & $81$ & $4$ & $\textbf{2}$ & $\textbf{2}$ & $12$ & $4$ & $\textbf{2}$ & - & $2$ \\
        \hline
    \end{tabular}
    \caption{Comparison of $\equ_3$ bounds for Sage named graphs. A ``-" indicates the graph does not satisfy the conditions for the bound to be applicable. Graph names are in bold when one of the new bounds is tight.}
    \label{tab:equ_3_bounds}
\end{table}

    \begin{table}[!htp]
    \tiny
        \centering
        \begin{tabular}{l|cc|c}
        \hline
        Graph & \eqref{eq:equ_distance_bound} & \eqref{eq:haemers_equ} & $\equ$ \\
        \hline
        Balaban 10-cage & $37$ & $35$ & $9$ \\
        Frucht graph & $6$ & $5$ & $3$ \\
        Meredith Graph & $46$ & $34$ & $5$ \\
        Moebius-Kantor Graph & $9$ & $8$ & $4$ \\
        Bidiakis cube & $7$ & $5$ & $3$ \\
        Gosset Graph& $8$ & $14$ & $7$ \\
        Balaban 11-cage & $60$ & $54$ & $7$ \\
        Moser spindle & $5$ & - & $3$ \\
        Gray graph & $31$ & $27$ & $6$ \\
        Nauru Graph & $16$ & $12$ & $4$ \\
        Blanusa First Snark Graph & $8$ & $8$ & $4$ \\
        Grotzsch graph& $8$ & - & $5$ \\
        Pappus Graph& $8$ & $9$ & $3$ \\
        Blanusa Second Snark Graph & $10$ & $7$ & $4$ \\
        \textbf{Hall-Janko graph} & $37$ & $\mathbf{10}$ & $10$ \\
        Poussin Graph& $7$ & - & $4$ \\
        Brinkmann graph & $12$ & $8$ & $5$ \\
        Harborth Graph & $24$ & $20$ & $4$ \\
        Perkel Graph & $19$ & $19$ & $6$ \\
        Brouwer-Haemers & $61$ & $21$ & $15$ \\
        Harries Graph& $31$ & $35$ & $10$ \\
        \textbf{Petersen graph} & $6$ & $\mathbf{4}$ & $4$ \\
        Bucky Ball & $28$ & $27$ & $3$ \\
        Harries-Wong Graph& $31$ & $35$ & $9$ \\
        \textbf{Robertson Graph} & $11$ & $\mathbf{7}$ & $7$ \\
        \textbf{Heawood Graph} & $8$ & $\mathbf{7}$ & $7$ \\
        Schläfli Graph& $7$ & $9$ & $6$ \\
        \textbf{Herschel graph}& $\mathbf{6}$ & - & $6$ \\
        \textbf{Shrikhande Graph} & $7$ & $\mathbf{4}$ & $4$ \\
        Higman-Sims Graph & $78$ & $26$ & $22$ \\
        \textbf{Sims-Gewirtz Graph} & $36$ & $\mathbf{16}$ & $16$ \\
        Chvatal Graph & $9$ & $5$ & $4$ \\
        \textbf{Hoffman Graph} & $9$ & $\mathbf{8}$ & $8$ \\
        Sousselier Graph& $7$ & - & $5$ \\
        Clebsch Graph & $11$ & $6$ & $5$ \\
        \textbf{Hoffman-Singleton Graph} & $29$ & $\mathbf{15}$ & $15$ \\
        Sylvester Graph & $26$ & $13$ & $6$ \\
        Coxeter Graph & $16$ & $12$ & $7$ \\
        Holt Graph & $11$ & $10$ & $4$ \\
        Szekeres Snark Graph& $21$ & $23$ & $5$ \\
        Desargues Graph& $6$ & $10$ & $4$ \\
        Horton Graph& $46$ & $48$ & $4$ \\
        \textbf{Thomsen graph} & $5$ & $\mathbf{3}$ & $3$ \\
        \textbf{Dejter Graph}& $\mathbf{8}$ & $56$ & $8$ \\
        Kittell Graph& $10$ & $23$ & $4$ \\
        Tietze Graph & $8$ & $5$ & $4$ \\
        Double star snark & $14$ & $13$ & $4$ \\
        Krackhardt Kite Graph& $6$ & - & $4$ \\
        Durer Graph & $5$ & $5$ & $3$ \\
        Klein 3-regular Graph & $29$ & $25$ & $7$ \\
        Truncated Tetrahedron & $9$ & $4$ & $3$ \\
        Dyck Graph & $17$ & $16$ & $4$ \\
        \textbf{Klein 7-regular Graph} & $9$ & $\mathbf{6}$ & $6$ \\
        Tutte 12-Cage & $71$ & $63$ & $9$ \\
        Ellingham-Horton 54-Graph& $26$ & $27$ & $4$ \\
        Tutte-Coxeter Graph & $19$ & $15$ & $5$ \\
        Ellingham-Horton 78-Graph& $37$ & $39$ & $5$ \\
        Ljubljana Graph& $50$ & $56$ & $8$ \\
        Errera Graph& $9$ & - & $4$ \\
        \textbf{Wagner Graph} & $6$ & $\mathbf{3}$ & $3$ \\
        F26A Graph & $13$ & $13$ & $3$ \\
        M22 Graph & $56$ & $21$ & $21$ \\
        Flower Snark & $12$ & $9$ & $4$ \\
        Markstroem Graph & $12$ & $10$ & $3$ \\
        Wells Graph & $9$ & $12$ & $5$ \\
        \textbf{Folkman Graph} & $11$ & $\mathbf{10}$ & $10$ \\
        Wiener-Araya Graph & $19$ & - & $4$ \\
        Foster Graph & $42$ & $45$ & $5$ \\
        McGee Graph & $12$ & $11$ & $5$ \\
        \textbf{Franklin Graph} & $7$ & $\mathbf{6}$ & $6$ \\
        \textbf{Hexahedron} & $\mathbf{4}$ & $\mathbf{4}$ & $4$ \\
        Dodecahedron & $8$ & $8$ & $4$ \\
        \textbf{Octahedron} & $4$ & $\mathbf{3}$ & $3$ \\
        Icosahedron & $4$ & $4$ & $3$ \\
        \hline
        \end{tabular}
        \caption{Comparison of $\equ$ bounds for Sage named graphs. A ``-" indicates the graph does not satisfy the conditions for the bound to be applicable. Graph names are in bold when one of the new bounds is tight.}
        \label{tab:equ_bounds}
    \end{table}

\end{document}